\newtheorem{teo}{Theorem}[section]
\newtheorem{lemma}{Lemma}[section]
\newtheorem{cor}{Corollary}[section]
\newtheorem{oss}{Remark}[section]
\newtheorem{prop}{Proposition}[section]
\newcommand{\ra}{\longrightarrow}
\newcommand{\dis}{\displaystyle}
\newcommand{\bs}{\backslash}
\newcommand{\ov}{\overline}
\newcommand{\R}{\mathbb{R}}
\newcommand{\ti}{\widetilde}
\newcommand{\N}{\mathbb{N}}
\newcommand{\eps}{\varepsilon}
\newcommand{\ph}{\varphi}
\DeclareMathOperator{\sgn}{sgn}
\newcommand{\rw}{\rightharpoonup}
\newenvironment{Si}[1]{\left\{\begin{array}{#1}}{\end{array} \right. }
\DeclareMathOperator{\ind}{ind}
\newtheorem{Te}{Theorem}
\title{Singular Liouville Equations on $S^2$: Sharp Inequalities and Existence Results}
\author{Gabriele Mancini\thanks{S.I.S.S.A/I.S.A.S, Via Bonomea 265, 34136 Trieste (Italy) - gmancini@sissa.it\\The author is supported by the PRIN project {\em Variational and perturbative aspects of nonlinear differential problems}.}}
\date{}
\begin{document}
\maketitle

\begin{abstract}We prove a sharp Onofri-type inequality and non-existence of extremals for a Moser-Tudinger functional on $S^2$ in the presence of potential having positive order singularities. We also investigate the existence of critical points and give some sufficient conditions under symmetry or nondegeneracy assumptions. 
\end{abstract}

\section{Introduction}
In this work we study sharp Onofri-type inequalities on the standard Euclidean sphere $(S^2,g_0)$, and existence of critical points for a singular Moser-Trudinger functional. Given a smooth, closed surface $\Sigma$, and $m$ points $p_1,\ldots,p_m\in \Sigma$,   we consider the functional  
\begin{equation}\label{funz mos}
J_\rho^h(u)= \frac{1}{2}\int_{\Sigma}|\nabla u|^2 dv_{g} + \frac{\rho}{|\Sigma|} \int_{\Sigma} u\; dv_{g}-\rho\log\left(\frac{1}{|\Sigma|}\int_{\Sigma} h e^{u} dv_{g}\right)
\end{equation}
where $h$ is  a positive singular  potential satisfying 
\begin{equation}\label{h1}
h\in C^\infty (\Sigma\bs \{p_1,\ldots,p_m\}) \qquad \mbox{ and } \quad h(x)\approx d(x,p_i)^{2\alpha}\mbox{ with } \alpha_i>-1 \mbox{ near }p_i,
\end{equation} $i=1,\ldots,m$. Functionals of this this kind were first introduced, for the regular case $m=0$, by Moser (\cite{mos}, \cite{mos2}), in connection to the study of the Gaussian curvature equation on compact surfaces and Nirenberg's problem on $S^2$. They also have a role in spectral analysis due to Polyakov's formula (see \cite{Poly1}, \cite{Poly2}, \cite{OPS}, \cite{OPS2}). In the case $m>0$, the functional \eqref{funz mos} appears in the problem of prescribing the Gaussian curvature  of Riemannian metrics with conical singularities.  We recall that a metric on $\Sigma$ with conical singularities of order $\alpha_1,\ldots, \alpha_m>-1$ in $ p_1,\ldots,p_m$, is a metric of the form $e^{u} g$ where $g$ is smooth metric on $\Sigma$, and $u\in C^\infty(\Sigma\bs\{p_1,\ldots,p_m\})$ satisfies 
$$
|u(x)+2\alpha_i \log d(x,p_i)|\le C \qquad \mbox{ near } p_i, \; i=1,\ldots,m.
$$ 
It is possible to prove (see Proposition 2.1 in \cite{BarMalDem}) that a metric of this form has Gaussian curvature $K$ if and only if $u$ is a distributional solution of the Gaussian curvature equation 
\begin{equation}\label{Gauss}
-\Delta_{g} u = 2 K e^{u} - 2K_{g} -4\pi \sum_{i=1}^m\alpha_i \delta_{p_i}.
\end{equation}
where $K_{g}$ is the Gaussian curvature of $(\Sigma,g)$.  If $\chi(\Sigma)+ \sum_{i=1}^m \alpha_i\neq 0$ and $K_g$ is constant, \eqref{Gauss} is equivalent to the singular Liouville equation
\begin{equation}\label{Lio sing}
-\Delta_g u= \rho\left(\frac{K e^u}{\int_{\Sigma} K e^u dv_g }-\frac{1}{|\Sigma|}  \right) -4\pi \sum_{i=1}^m \alpha_i \left(\delta_{p_l}-\frac{1}{|\Sigma|}\right)
\end{equation} 
for 
\begin{equation}\label{rho geom}
\rho = \rho_{geom}:= 4\pi \left( \chi(\Sigma)+ \sum_{i=1}^m\alpha_i \right).
\end{equation}
Denoting by $G$ the Green's function of $(\Sigma,g)$, that is the solution of
\begin{equation}\label{Gr}
\begin{Si}{c}
-\Delta_g G(x,\cdot) =\delta_x \; \mbox{ on }\; \Sigma\\
\int_\Sigma G(x,y)dv_{g}(y)=0,
\end{Si}
\end{equation}
the change of variable $u\longleftrightarrow u+4\pi \sum_{i=1}^m\alpha_i G(x,p_i)$ reduces \eqref{Lio sing} to 
\begin{equation}\label{regular}
-\Delta_{g} u  = \rho \left( \frac{ h e^{u}}{\int_{\Sigma} h e^{u}dv_g} -\frac{1}{|\Sigma|}\right)
\end{equation}
that is the Euler-Lagrange equation of the functional \eqref{funz mos} corresponding to the potential
\begin{equation}\label{h e K}
h(x)= K e^{-4\pi \sum_{i=1}^m \alpha_i G_{p_i}},
\end{equation}
which satisfies \eqref{h1}. Equations \eqref{Lio sing} and \eqref{regular} have also been widely studied in  mathematical physics. For example, they appear in the description of Abelian vortices in Chern-Simmons-Higgs theory, and have applications in Superconductivity and Electroweak theory (\cite{Tarantello}, \cite{HongKim}). We refer to \cite{BarMal}, \cite{CarMal1}, \cite{CarMal2}, \cite{MalRui}, \cite{Carl}, \cite{ChenLin3}, \cite{ChenLin4}, for some recent existence results.

A fundamental role in the variational  analysis of \eqref{funz mos} is played by singular versions of the standard Moser-Trudinger inequality (see \cite{mos,troyanov}). In \cite{troyanov}, Troyanov  proved  that, for  every function $h$ satisfying \eqref{h1}, there exists a constant $C= C(h,g,\Sigma)$ such that
\begin{equation}\label{troy}
\log \left(\frac{1}{|\Sigma|}\int_{\Sigma} h e^{u-\ov{u}} dv_g  \right)\le \frac{1}{16\pi(1+\ov{\alpha})}\int_{\Sigma}|\nabla u|^2 dv_{g} + C(h,\Sigma,g)
\end{equation}
 $\forall\; u\in H^1(\Sigma)$, where $\dis{\ov{\alpha}=\min\left\{0,\min_{1\le i\le m}\alpha_i\right\}}$.
In particular the functional $J_\rho^h$ is bounded from below $\forall\; \rho \in (0,8\pi(1+\ov{\alpha})]$ and is coercive for $\rho \in (0,8\pi(1+\ov{\alpha}))$. Furthermore, it is possible to prove that the constant $\frac{1}{16\pi}$ is sharp, that is 
$$
\inf_{H^1(\Sigma)} J_\rho^h = -\infty \quad \forall\; \rho >8\pi (1+\ov{\alpha}).
$$
In the special case $m=0$, $h\equiv 1$ and $(\Sigma,g)=(S^2,g_0)$, a sharp version of \eqref{troy} was proved by Onofri in \cite{onofri}:
\begin{Te}[{\bf Onofri's inequality \cite{onofri}}]\label{onofri} $\forall\; u\in H^1(S^2)$ we have
$$
\log \left(\frac{1}{4\pi}\int_{S^2} e^{u-\ov{u}} dv_{g_0}  \right)\le \frac{1}{16\pi}\int_{S^2}|\nabla u|^2 dv_{g_0}
$$
with equality holding if and only if $e^{u}g_0$ is a metric on $S^2$ with positive constant Gaussian curvature, or, equivalently, $u=\log |\det d\ph| +c$ with $c\in \R$ and  $\ph:S^2 \ra  S^2$ a conformal diffeomorphism of $S^2$.
\end{Te}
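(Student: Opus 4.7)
The plan is to recast the inequality as the minimisation problem
\begin{equation*}
I := \inf_{\substack{u \in H^1(S^2)\\\ov{u}=0}} \Phi(u), \qquad \Phi(u) := \frac{1}{16\pi}\int_{S^2}|\nabla u|^2 dv_{g_0} - \log\left(\frac{1}{4\pi}\int_{S^2} e^{u}dv_{g_0}\right),
\end{equation*}
which is (up to the constant factor $8\pi$) the functional $J_{8\pi}^{1}$ in the regular case $h\equiv 1$, after fixing the free additive constant by the normalisation $\ov{u}=0$. Troyanov's inequality \eqref{troy}, specialised to $m=0$ and $\ov{\alpha} = 0$, already yields $I > -\infty$, so the task reduces to showing $I = 0$ and identifying the minimisers as log-Jacobians of Möbius transformations.

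The essential structural ingredient is the conformal invariance $\Phi(T_\ph u) = \Phi(u)$ under the action $T_\ph u := u\circ\ph + \log|\det d\ph|$ for a Möbius transformation $\ph$ of $S^2$, which follows from the conformal invariance of the Dirichlet energy in two dimensions together with the change-of-variables identity $\int_{S^2} e^{T_\ph u}dv_{g_0} = \int_{S^2} e^u dv_{g_0}$. Since $\mathrm{Conf}(S^2)$ is non-compact, a bare minimising sequence may concentrate at a point, and the remedy is a \emph{balancing} step: for every admissible $u$ I would choose $\ph_u\in\mathrm{Conf}(S^2)$ so that the normalised representative $\ti u := T_{\ph_u} u - \ov{T_{\ph_u}u}$ satisfies the center-of-mass condition $\int_{S^2} x\, e^{\ti u}(x)\, dv_{g_0}(x) = 0 \in \R^3$; this is feasible by a Brouwer-type topological argument on the three-parameter family of conformal dilations of $S^2$, and preserves the value of $\Phi$.

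Under this balancing condition an Aubin-type improved Moser-Trudinger inequality holds with constant $\tfrac{1}{32\pi}$ in place of $\tfrac{1}{16\pi}$, producing genuine $H^1$-coercivity of $\Phi$ on the balanced set. A balanced minimising sequence $u_n$ is therefore uniformly bounded in $H^1(S^2)$; weak lower semicontinuity of the Dirichlet energy, together with strong convergence of $e^{u_n}$ in $L^1(S^2)$ (from Moser's inequality and Vitali's theorem), then guarantees that a subsequential weak limit $u^*$ attains the infimum.

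Any minimiser $u^*$ satisfies the Euler-Lagrange equation
\begin{equation*}
-\Delta_{g_0}u^* = 8\pi\left(\frac{e^{u^*}}{\int_{S^2}e^{u^*}dv_{g_0}} - \frac{1}{4\pi}\right),
\end{equation*}
which says $e^{u^*}g_0$ has constant positive Gaussian curvature. The classification of such metrics on $S^2$ (equivalently, Chen-Li's classification of entire Liouville solutions on $\R^2$ after stereographic projection) forces $u^* = \log|\det d\ph|+c$ for some $\ph\in\mathrm{Conf}(S^2)$, and a direct substitution gives $\Phi(\log|\det d\ph|+c) = 0$. Hence $I=0$ and the equality cases are exactly the ones stated. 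I expect the main obstacle to be the compactness step: concretely, the combination of (i) solving the balancing equation $\int_{S^2} x\, e^{T_{\ph_u}u}dv_{g_0}=0$ continuously in $u$, and (ii) establishing the Aubin-type improved inequality with a constant sharp enough to actually yield coercivity on the balanced set, since it is this precise numerical gain (and not merely a boundedness estimate) that upgrades Troyanov's inequality into the full existence of a minimiser.
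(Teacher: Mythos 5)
You should first note that the paper does not prove this statement at all: it is quoted as Theorem \ref{onofri} directly from \cite{onofri} and used as a black box (the paper's own contribution, Theorem 1.1, is \emph{deduced} from it). So there is no internal proof to compare against, and your sketch can only be judged on its own terms.

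On its own terms, your outline is the standard variational proof of Onofri's inequality (Aubin's center-of-mass improvement plus conformal balancing plus classification of constant-curvature conformal metrics), and it is correct in outline. Two remarks. First, the continuity in $u$ of the balancing map, which you single out as the main obstacle, is not actually needed: for the direct method it suffices to know that \emph{each fixed} $u$ with $e^u\in L^1$ admits some $\ph_u$ with $\int_{S^2} x\, e^{T_{\ph_u}u}\,dv_{g_0}=0$, which is the classical Brouwer-degree argument on the unit ball parametrizing conformal dilations (the map extends to the boundary sphere as the identity because the mass concentrates at the dilation center). Since $T_\ph$ preserves $\Phi$, the infimum over balanced functions coincides with the unconstrained infimum, so a balanced minimizer is automatically a free critical point and satisfies the unconstrained Euler--Lagrange equation; this is the point that makes your step ``any minimiser satisfies \eqref{regular} with $\rho=8\pi$, $h\equiv 1$'' legitimate (the alternative route, minimizing subject to the constraint and killing the Lagrange multipliers via the Kazdan--Warner identity, also works but is not needed here). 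Second, the genuinely hard inputs are exactly the two you cite and must be taken from the literature: Aubin's improved inequality with constant $\frac{1}{32\pi}+\eps$ under the center-of-mass condition (which does give coercivity since $\frac{1}{16\pi}>\frac{1}{32\pi}+\eps$), and the uniqueness/classification of conformal metrics of constant positive curvature on $S^2$ (equivalently the Chen--Li classification of entire Liouville solutions), which, combined with the direct verification that $\Phi(\log|\det d\ph|+c)=0$, closes the equality case. With those two results granted, your argument is a faithful reconstruction of the known proof.
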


Motivated by this result, in \cite{mio2} we started the study Onofri-type inequalities and existence of energy-minimizing solutions on $S^2$ for the potential 
$$
h(x)= e^{-4\pi \sum_{i=1}^m\alpha_i G(x,p_i)}
$$
(i.e. \eqref{h e K} with $K\equiv 1$), and we extended Theorem \ref{onofri}  to the cases $m=1$, and $m=2$ with $\min\{\alpha_1,\alpha_2\}<0$. 

\begin{Te}[\cite{mio2}]\label{una sing2}
If $h=e^{-4\pi \alpha G_p}$ with $\alpha\neq 0$, then $\forall\; u\in H^1(\Sigma)$
$$\log\left(\frac{1}{4\pi}\int_{S^2} h e^{u-\ov{u}}dv_{g_0}\right)<\frac{1}{16\pi\min\{1,1+\alpha\}}\int_{S^2} |\nabla u|^2dv_{g_0}+\max\left\{\alpha,-\log(1+\alpha)\right\}.$$ Moreover equation \eqref{regular} has no solution for $\rho= 8\pi\min\{1,1+\alpha\}$. 
\end{Te}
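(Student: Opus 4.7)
The plan is to exploit the explicit form of the Green's function on $S^2$. With the normalization $\int_{S^2} G_p\,dv_{g_0}=0$, a direct computation in geodesic polar coordinates around $p$ yields $G_p(x)=-\tfrac{1}{2\pi}\log\sin(\theta/2)-\tfrac{1}{4\pi}$, where $\theta=d(x,p)$, and hence the weight has the simple closed form
$$h(x)=e^{\alpha}\sin^{2\alpha}(\theta/2).$$
The two signs of $\alpha$ are then handled by quite different means.

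For $\alpha>0$, the above formula gives at once the sharp pointwise bound $h\le e^{\alpha}$, with strict inequality outside the single antipodal point $\theta=\pi$. Since $e^{u-\ov u}>0$ everywhere on $S^2$, one has
$$\log\!\Bigl(\tfrac{1}{4\pi}\!\int_{S^2}\! he^{u-\ov u}\,dv_{g_0}\Bigr)<\alpha+\log\!\Bigl(\tfrac{1}{4\pi}\!\int_{S^2}\! e^{u-\ov u}\,dv_{g_0}\Bigr)\le\alpha+\tfrac{1}{16\pi}\!\int_{S^2}\!|\nabla u|^2\,dv_{g_0},$$
the second estimate being Theorem~\ref{onofri}. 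The first estimate is already strict for every $u\in H^1(S^2)$, so the desired inequality with sharp constant $\alpha$ follows immediately.

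For $\alpha\in(-1,0)$ the weight is unbounded near $p$ and a direct pointwise bound is not available. I would first reduce to axially symmetric profiles: since $h$ depends only on $\theta$ and is strictly decreasing in $\theta$, replacing $u$ by its spherical symmetric-decreasing rearrangement $u^\ast$ around $p$ gives $\int h e^u\le\int h e^{u^\ast}$ (Hardy-Littlewood) and $\int|\nabla u^\ast|^2\le\int|\nabla u|^2$ (P\'olya-Szeg\H{o}), so it suffices to prove the claim for $u=u(\theta)$. This reduces the problem to a 1-dimensional variational problem on $(0,\pi)$, whose Euler-Lagrange equation is a Liouville-type ODE whose solutions can be written explicitly via the one-parameter family of M\"obius transformations of $S^2$ that fix $p$. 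Evaluating $J^h_{8\pi(1+\alpha)}$ along this family of "bubble" profiles gives the value $8\pi(1+\alpha)\log(1+\alpha)$, identified as the infimum by a concentration-compactness analysis showing that any minimizing sequence must concentrate at $p$; since the infimum is only approached in the limit of concentration, it is not attained in $H^1$, yielding the strict inequality with constant $-\log(1+\alpha)$.

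Non-existence at $\rho^\ast=8\pi\min\{1,1+\alpha\}$ then follows by the following route. For $\rho<\rho^\ast$ the functional $J^h_\rho$ is coercive (by \eqref{troy}), so it admits a smooth minimizer $u_\rho$ solving \eqref{regular}. If a solution existed at $\rho^\ast$, a blow-up analysis of the subcritical family $\{u_\rho\}_{\rho<\rho^\ast}$ together with a Pohozaev-Kazdan-Warner identity (obtained by testing \eqref{regular} against a conformal vector field on $S^2$ whose flow does not preserve $h$) forces the solution to saturate the equality in the Onofri-type inequality above, contradicting the strict inequality just proved. The main technical obstacle is the case $\alpha<0$: the explicit ODE analysis replaces the elementary pointwise bound of the $\alpha>0$ case, and identifying the sharp constant $-\log(1+\alpha)$ requires a careful computation along the bubble family together with a non-trivial compactness argument.
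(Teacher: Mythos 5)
This theorem is quoted from \cite{mio2}; the present paper contains no proof of it, so your attempt can only be judged on its own terms. The $\alpha>0$ half of the inequality is correct and elementary (indeed $h=e^{\alpha}\sin^{2\alpha}(\theta/2)\le e^{\alpha}$ with equality only at $-p$, then Onofri), and the rearrangement reduction for $\alpha\in(-1,0)$ is legitimate (for $\alpha<0$ the weight $h$ is symmetric decreasing about $p$, so Hardy--Littlewood and P\'olya--Szeg\H{o} apply). The genuine gap is what comes next: the content of the statement for $\alpha<0$ is the \emph{lower} bound $J^h_{8\pi(1+\alpha)}(u)\ge 8\pi(1+\alpha)\log(1+\alpha)$ on radial functions, and you only assert it (``a concentration-compactness analysis showing that any minimizing sequence must concentrate at $p$''); no argument is sketched for it. Moreover the mechanism you do describe is wrong on two counts. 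First, the radial Euler--Lagrange equation at $\rho=8\pi(1+\alpha)$ has \emph{no} solutions at all --- that is precisely the second assertion of the theorem --- so it cannot be ``solved explicitly via M\"obius transformations fixing $p$''. Second, the family $u_\lambda=\log|\det d\ph_\lambda|$ generated by those M\"obius maps is not even an optimizing family: writing $h\circ\sigma^{-1}(y)=e^{\alpha}|y|^{2\alpha}(1+|y|^2)^{-\alpha}$ in stereographic coordinates one finds $J^h_{8\pi(1+\alpha)}(u_\lambda)=16\pi\alpha^{2}\log\lambda+O(1)\to+\infty$, so it neither solves the ODE nor produces the value $8\pi(1+\alpha)\log(1+\alpha)$. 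The correct quasi-extremals are the singular bubbles $2\log\bigl((1+|y|^2)^{1+\alpha}/(1+e^{\lambda}|y|^{2(1+\alpha)})\bigr)$ of Theorem \ref{due sing2} (exact solutions only when a second singularity of the same order sits at $-p$), and even exhibiting them only yields the upper bound (sharpness of the constant), not the inequality you must prove. A concrete route to the lower bound in the radial class is the substitution $v_{\alpha}(s)=v(s^{1/(1+\alpha)})$ of Lemma \ref{sym}, but extracting the sharp constant $-\log(1+\alpha)$ from it requires bookkeeping your sketch does not attempt.

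The non-existence part is also not proved. The strict inequality only excludes minimizers; it says nothing about other solutions of \eqref{regular} at $\rho=8\pi\min\{1,1+\alpha\}$ --- compare Theorems \ref{teo 3}--\ref{teo 5} of this paper, which produce non-minimizing solutions at $\rho=8\pi$ even though $J^h_{8\pi}$ has no minimum by Theorem \ref{sing pos}. Your bridge, namely that a Pohozaev--Kazdan--Warner identity ``forces the solution to saturate the equality'', is unjustified (a critical point has no reason to saturate the variational inequality), and the role of the subcritical minimizers $u_\rho$ in that argument is never explained. What is actually needed --- and what \cite{mio2} does, as recalled in the Remark after Lemma \ref{BarMal} --- is to apply a Pohozaev/Kazdan--Warner-type identity directly to a hypothetical solution at the critical parameter: pairing \eqref{regular} with the conformal Killing field along the axis through $p$ and using that $h$ is strictly monotone in $d(x,p)$ gives a sign contradiction, independently of the inequality. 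Note also that for $\alpha>0$ the non-existence is claimed at $\rho=8\pi$, where blow-up at regular points is quantized exactly at $8\pi$, so this identity step cannot be bypassed.
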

\vspace{0.1cm}
\begin{Te}[\cite{mio2}] \label{due sing2}
If $h=e^{-4\pi \alpha_1 G_p-4\pi \alpha_2 G_{p_2}}$ with $p_2= -p_1$, $\alpha_1= \min\{\alpha_1,\alpha_2\}<0$, then $\forall\; u\in H^1(\Sigma)$
$$\log\left(\frac{1}{4\pi}\int_{S^2} h e^{u-\ov{u}}dv_{g_0}\right)\le \frac{1}{16\pi(1+\alpha_1)}\int_{S^2} |\nabla u|^2dv_{g_0}+ \alpha_2-\log(1+\alpha_1).$$
If $\alpha_1\neq \alpha_2$ there is no function realizing equality and no solution of \eqref{regular} for $\rho=8\pi(1+\alpha_1)$, while if $\alpha_1=\alpha_2$ then equality holds for $u$ if and only if the following equivalent conditions are satisfied: 
\begin{itemize}
\item $u$ is a solution of \eqref{regular} for $\rho =8\pi(1+\alpha_1)$.
\item  $h e^{u} g$ is a metric with constant positive Gaussian curvature and conical singularities of order $\alpha_i$ in $p_i$, $i=1,2$.
\item If $\pi$ denotes the stereographic projection from $p_1$, then $$
u\circ \pi^{-1}(y)= 2\log\left( \frac{(1+|y|^2)^{1+\alpha_1}}{1+e^\lambda|y|^{2(1+\alpha_1)}} \right)+c
$$
for some $\lambda,\; c \in \R$. 
\end{itemize}
\end{Te}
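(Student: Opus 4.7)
The plan is to adapt the approach of Theorem \ref{una sing2} to the two antipodal singularities setting by exploiting the $O(2)$-symmetry around the axis through $p_1$ and $p_2$. I would proceed in three stages: transport to a weighted problem on $\R^2$, reduction to radial competitors, and one-dimensional analysis. As a first step, let $\pi:S^2\setminus\{p_1\}\to \R^2$ be stereographic projection from $p_1$, so that $\pi(p_2)=0$ and $dv_{g_0}=\frac{4}{(1+|y|^2)^2}\,dy$. Setting $v=u\circ\pi^{-1}$, conformal invariance of the Dirichlet integral gives $\int_{S^2}|\nabla u|^2\,dv_{g_0}=\int_{\R^2}|\nabla v|^2\,dy$, while the identities $|\pi^{-1}(y)-p_1|^2=\frac{4}{1+|y|^2}$ and $|\pi^{-1}(y)-p_2|^2=\frac{4|y|^2}{1+|y|^2}$ together with the expansion of the Green's function turn $h$ into $h(\pi^{-1}(y))=c_0\,|y|^{2\alpha_2}(1+|y|^2)^{-\alpha_1-\alpha_2}$. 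The inequality becomes the weighted Moser--Trudinger estimate
\begin{equation*}
\log\!\left(\frac{c_1}{4\pi}\int_{\R^2}\frac{|y|^{2\alpha_2}}{(1+|y|^2)^{2+\alpha_1+\alpha_2}}\,e^{v-\bar v}\,dy\right)\le \frac{1}{16\pi(1+\alpha_1)}\int_{\R^2}|\nabla v|^2\,dy+\alpha_2-\log(1+\alpha_1).
\end{equation*}

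The transported weight is radial, so an $O(2)$-symmetrization should reduce to $v=v(|y|)$ at the cost of making both sides no worse. Writing $v(y)=f(\log|y|)$ and $t=\log|y|$, one computes $\int_{\R^2}|\nabla v|^2\,dy=2\pi\int_\R (f')^2\,dt$, and the problem becomes a one-dimensional Moser--Trudinger inequality on $\R$ with weight $e^{2(1+\alpha_2)t}(1+e^{2t})^{-2-\alpha_1-\alpha_2}$. The corresponding Euler--Lagrange ODE admits, in the symmetric case $\alpha_1=\alpha_2$, the explicit family $f_\lambda(t)=2(1+\alpha_1)\log(1+e^{2t})-2\log(1+e^{\lambda+2(1+\alpha_1)t})$ which, under transport back to $S^2$, reproduces the profiles stated in the theorem. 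A direct phase-plane analysis of the ODE, combined with a calibration computation of $J_{8\pi(1+\alpha_1)}^h$ on this family, should yield both the sharp constant $\alpha_2-\log(1+\alpha_1)$ and the fact that the infimum is attained precisely when $\alpha_1=\alpha_2$.

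For the equality analysis, when $\alpha_1=\alpha_2$ the family displayed in the statement satisfies \eqref{regular} at $\rho=8\pi(1+\alpha_1)$ (it corresponds to a constant-curvature metric with two symmetric conical singularities) and direct substitution would verify equality; conversely, any equality case solves \eqref{regular} and by the classification of solutions of the singular Liouville equation with two antipodal conical points of equal order must lie in this family, establishing the three equivalent conditions. When $\alpha_1<\alpha_2$, the ODE in Stage 2 acquires an extra factor $(1+e^{2t})^{\alpha_1-\alpha_2}$ breaking the conformal dilation invariance, so no profile of the above form is stationary, no extremal exists, and the inequality is strict. Non-existence of solutions of \eqref{regular} at $\rho=8\pi(1+\alpha_1)$ then follows immediately, since any such solution would be a minimizer of $J_\rho^h$ and would thus saturate the inequality. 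The hardest part is the symmetrization in Stage 2: the weight $|y|^{2\alpha_2}(1+|y|^2)^{-2-\alpha_1-\alpha_2}$ fails to be monotone in $|y|$ when $\alpha_2>0$, and a rearrangement is needed that simultaneously decreases the Dirichlet integral and increases the weighted exponential integral. Since standard Schwarz/Steiner techniques do not directly apply, one likely has to use a spherical cap rearrangement on $S^2$ adapted to the axis through $p_1,p_2$, or a Fourier decomposition in the azimuthal variable controlling the higher angular modes; this is the main new technical difficulty beyond Theorem \ref{una sing2}.
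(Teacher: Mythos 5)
This statement is quoted in the paper from \cite{mio2} without proof, so there is no internal argument to compare against; judged on its own terms, your outline has two genuine gaps. The first is the Stage-2 reduction to axially symmetric competitors, which you yourself defer ("one likely has to use a spherical cap rearrangement \dots or a Fourier decomposition"): this is not a technical footnote but the heart of the theorem, since the sharp constant is a statement over all of $H^1(S^2)$ and everything after Stage 2 only sees radial functions. Your diagnosis of where rearrangement breaks down is also off. Writing $t$ for the polar angle from $p_1$, one has $h=c\,\sin^{2\alpha_1}(t/2)\cos^{2\alpha_2}(t/2)$, which (because $\alpha_1<0$) is monotone decreasing in $t$ exactly when $\alpha_2\ge 0$; in that case spherical-cap rearrangement about $p_1$ (Baernstein--Taylor P\'olya--Szeg\H{o} plus Hardy--Littlewood, with $\ov{u}$ preserved) works, whereas the genuinely non-monotone case is $\alpha_1\le\alpha_2<0$, where $h$ blows up at both poles; and in the planar picture Schwarz symmetrization loses control of the spherical average $\ov{u}$, which enters the inequality. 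Moreover, even granting radial symmetry, the claim that a "phase-plane analysis \dots should yield" the sharp constant $\alpha_2-\log(1+\alpha_1)$, strictness for $\alpha_1\neq\alpha_2$ and attainment for $\alpha_1=\alpha_2$ is asserted rather than proved: the radial infimum need not be attained, and one still needs either a pointwise reduction (e.g.\ the substitution $s=r^{1+\alpha_1}$ as in Lemma \ref{sym} combined with Onofri's inequality) or a subcritical minimization and blow-up computation to identify the constant.

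The second gap is a step that is simply false as stated: "any such solution would be a minimizer of $J_\rho^h$ and would thus saturate the inequality." A solution of \eqref{regular} at $\rho=8\pi(1+\alpha_1)$ is only a critical point of $J^h_\rho$, and at the critical parameter critical points need not be global minimizers --- indeed the present paper is devoted to producing non-minimizing critical points at $\rho=8\pi$ precisely after Theorem \ref{sing pos} rules out minimizers. So non-existence of extremals for $\alpha_1\neq\alpha_2$ does not "immediately" give non-existence of solutions; an independent argument is required, e.g.\ a Pohozaev-type identity (the tool used in \cite{mio2} for the one-singularity case, as recalled in the remark following Lemma \ref{BarMal}) or a classification of all solutions at the critical parameter. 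Relatedly, in the case $\alpha_1=\alpha_2$ the equivalence of the three bullet conditions leans on a Prajapat--Tarantello-type classification that you invoke only in passing, together with the verification that every such solution actually achieves equality; neither step is carried out in the proposal.
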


We stress that the critical parameter $\rho = 8\pi(1+\ov{\alpha})$ is generally different from the geometric parameter \eqref{rho geom} (except for some special cases, for example $m=2$ and $\alpha_1=\alpha_2<0$), thus critical points cannot always be interpreted in terms of metrics with prescribed curvature. 

In this paper we will assume \eqref{h e K} with  $\alpha_i\ge 0$ for $1\le i\le m$ and 
$$
K\in C^\infty_+(S^2):=\left\{f\in C^\infty(S^2)\;:\quad f(x)>0\quad \forall\; x\in S^2\right\}.
$$ Our first result  is a further extension of Onofri's inequality. 

\begin{teo}\label{sing pos}
Assume that $h$ satisfies \eqref{h e K} with $K\in C^\infty_+(S^2)$ and $\alpha_1,\ldots,\alpha_m\ge 0$, then
$$
\inf_{H^1(S^2)} J_{8\pi}^h = -8\pi \log \max_{S^2} h.
$$
Moreover $J_\rho^h$ has no minimum point, unless $\alpha_1=\ldots=\alpha_m=0$ (or, equivalently, $m=0$) and   $K$ is constant. 
\end{teo}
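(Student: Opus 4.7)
The plan is to sandwich $\inf J_{8\pi}^h$ between matching bounds and then read off the equality case. Set $M:=\max_{S^2}h$.

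\textbf{Lower bound.} For any $u\in H^1(S^2)$ one may rewrite
\[
J_{8\pi}^h(u)=\frac{1}{2}\int_{S^2}|\nabla u|^2\,dv_{g_0}-8\pi\log\!\left(\frac{1}{4\pi}\int_{S^2} h\,e^{u-\ov{u}}\,dv_{g_0}\right).
\]
Bounding $h\le M$ under the integral and applying Onofri's inequality (Theorem \ref{onofri}) to the remaining ratio absorbs the gradient term exactly and yields $J_{8\pi}^h(u)\ge -8\pi\log M$.

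\textbf{Matching upper bound via concentration.} Since $\alpha_i\ge 0$ and $G(\cdot,p_i)$ blows up at $p_i$, formula \eqref{h e K} gives $h(p_i)=0$ whenever $\alpha_i>0$; consequently $M$ is attained at some $p^*\in S^2$ where $h$ is continuous. I test the functional on a family of conformal diffeomorphisms $\phi_\lambda:S^2\to S^2$ (M\"obius dilations by $\lambda$ in a stereographic chart centred at $-p^*$), so $\phi_\lambda\to p^*$ pointwise a.e., and set $u_\lambda:=\log|\det d\phi_\lambda|$. By the equality case of Theorem \ref{onofri} the $u_\lambda$ saturate Onofri, so the gradient and mean terms in $J_{8\pi}^h(u_\lambda)$ cancel exactly and leave
\[
J_{8\pi}^h(u_\lambda)=-8\pi\log\!\left(\frac{1}{4\pi}\int_{S^2} h\,e^{u_\lambda}\,dv_{g_0}\right).
\]
The measures $e^{u_\lambda}\,dv_{g_0}=\phi_\lambda^{*}dv_{g_0}$ all have mass $4\pi$ and concentrate at $p^*$; since $h$ is bounded on $S^2$ and continuous at $p^*$, the right-hand side tends to $-8\pi\log h(p^*)=-8\pi\log M$, matching the lower bound.

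\textbf{Non-existence of minimizers.} A minimizer $u^*$ would have to saturate both steps of the lower bound. In particular equality in the pointwise estimate $h\le M$, used under an integral against the strictly positive weight $e^{u^*-\ov{u^*}}$, forces $h\equiv M$ a.e. on $S^2$. By \eqref{h e K} this compels $\alpha_1=\cdots=\alpha_m=0$ and $K\equiv M$; otherwise no minimizer exists. In the degenerate constant case the claim reduces to the classical Theorem \ref{onofri}, whose extremals $\log|\det d\phi|+c$ do realize the infimum. The only technical point that needs care is the concentration $e^{u_\lambda}\,dv_{g_0}\rightharpoonup 4\pi\delta_{p^*}$, which is a direct stereographic computation on the bubbles, together with the harmless interchange of limit and integral against the bounded potential $h$ near $p^*$.
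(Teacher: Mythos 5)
Your proposal is correct and follows essentially the same route as the paper: the lower bound by replacing $h$ with $\max_{S^2}h$ inside the logarithm and invoking Onofri's inequality, the matching upper bound by testing with the Onofri extremals $\log|\det d\phi_\lambda|$ concentrating at a maximum point of $h$ (using $h\in C^0(S^2)$ since $\alpha_i\ge 0$), and non-existence of minimizers from the equality case forcing $h\equiv\max_{S^2}h$, hence $\alpha_1=\cdots=\alpha_m=0$ and $K$ constant. The only cosmetic difference is the choice of stereographic chart for the dilations, which does not affect the argument.
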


Clearly,  the sharp value of the constant $C(h,S^2,g_0)$  is given by 
$$
C(h,S^2,g_0)=-\frac{1}{8\pi(1+\ov{\alpha})}\inf_{H^1(S^2)} J_{8\pi(1+\ov{\alpha})}^h,
$$
thus  Theorem \ref{sing pos} is equivalent to the following sharp inequality:

\begin{cor}
If $h$ satisfies \eqref{h e K} with $K\in C^\infty_+(S^2)$ and $\alpha_1,\ldots,\alpha_m\ge 0$, then  $\forall\; u\in H^1(S^2)$ we have 
$$\log\left(\frac{1}{4\pi}\int_{S^2} h e^{u-\ov{u}}dv_{g_0}\right)\le \frac{1}{16\pi}\int_{S^2} |\nabla u|^2dv_{g_0}+ \log\max_{S^2} h$$
with equality holding if and only if $m=0$, $K$ is constant and  $u$ realizes equality in Theorem \ref{onofri}.
\end{cor}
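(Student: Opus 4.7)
The corollary is essentially an algebraic rewriting of Theorem \ref{sing pos}, so my plan is to derive it directly from that theorem. First I would expand $J^h_{8\pi}(u)$ using $|S^2|=4\pi$ and the splitting $u = \ov u + (u-\ov u)$. Since $\frac{8\pi}{4\pi}\int_{S^2} u\, dv_{g_0} = 8\pi\, \ov u$, the linear term in $J^h_{8\pi}$ will cancel against the $\ov u$ extracted from $\log\left(\frac{1}{4\pi}\int_{S^2} h e^u\, dv_{g_0}\right)$, leaving
$$
J^h_{8\pi}(u) = \frac{1}{2}\int_{S^2}|\nabla u|^2\, dv_{g_0} - 8\pi \log\left(\frac{1}{4\pi}\int_{S^2} h e^{u-\ov u}\, dv_{g_0}\right).
$$
Dividing the estimate $J^h_{8\pi}(u) \ge \inf_{H^1(S^2)} J^h_{8\pi} = -8\pi\log\max_{S^2} h$ from Theorem \ref{sing pos} by $8\pi$ and rearranging will then immediately yield the claimed bound for every $u\in H^1(S^2)$.

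For the equality case, I would observe that equality at some $u$ is equivalent to $J^h_{8\pi}(u) = -8\pi \log\max_{S^2} h = \inf J^h_{8\pi}$, i.e.\ to $u$ being a minimizer of $J^h_{8\pi}$. By the second part of Theorem \ref{sing pos}, such a minimizer can only exist when $m=0$ and $K$ is constant; in that regular case $h\equiv K\equiv \max_{S^2} h$, the factor $\log\max_{S^2} h$ on the right cancels with the constant $h$ inside the integral on the left, and the inequality collapses to the classical Onofri inequality of Theorem \ref{onofri}. The characterization of equality would then be read off directly from that theorem (conformal pull-backs $u = \log|\det d\ph|+c$), and conversely every $u$ saturating Onofri trivially saturates the corollary in the regular case.

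All of the analytic content is concentrated in Theorem \ref{sing pos}, so I do not expect any serious obstacle beyond the elementary bookkeeping above. The one mild point worth verifying is that, because all $\alpha_i\ge 0$, the potential $h$ in \eqref{h e K} is continuous on $S^2$ (vanishing at those $p_i$ with $\alpha_i>0$), so $\max_{S^2} h$ is finite and attained, and both sides of the stated inequality are unambiguously defined.
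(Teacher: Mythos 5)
Your proposal is correct and follows exactly the route the paper intends: the paper presents the corollary as a direct restatement of Theorem \ref{sing pos} (via the identity relating the sharp constant to $\inf_{H^1(S^2)} J_{8\pi}^h$), and your expansion of $J_{8\pi}^h$, the identification of equality with minimality, and the reduction of the equality case to Onofri's inequality are precisely the bookkeeping behind that equivalence.
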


Theorem \ref{sing pos} states that $J_{8\pi}^h$  has no minimum point, but does not exclude the existence of different kinds on critical points. In contrast to Theorem \ref{due sing2}, if $\alpha_i>0$ for $1\le i \le m$, we will show that in many cases it is possible to find saddle points of $J_{8\pi}^h$. A simple example is given by the case in which $h$ is axially symmetric. In this case an improved Moser-Trudinger inequality allows to minimize $J_{8\pi}^h$ in the class of axially symmetric functions and find a solution of \eqref{regular}.

\begin{teo}\label{teo 2}
Assume that $h$ satisfies \eqref{h e K} with $m=2$, $p_1=-p_2$, $\min\{\alpha_1,\alpha_2\}=\alpha_1>0$ and $K\in C^\infty_+(S^2)$ axially symmetric with respect to the direction identified by $p_1$ and $p_2$. Then the Liouville equation \eqref{regular} has an axially symmetric solution $\forall \rho \in (0,8\pi(1+\alpha_1))$.
\end{teo}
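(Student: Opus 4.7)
The plan is to minimize $J_\rho^h$ on the subspace of axially symmetric functions and then to conclude via Palais' principle of symmetric criticality. Let $G\simeq SO(2)$ denote the group of rotations of $(S^2,g_0)$ fixing the axis through $p_1$ and $p_2$, and set $H^1_G(S^2):=\{u\in H^1(S^2)\,:\,u\circ g=u\ \forall g\in G\}$. Because $K$ is axially symmetric and $G$ acts by isometries fixing each $p_i$, both the potential $h$ and the functional $J_\rho^h$ are $G$-invariant.

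\textbf{Key step: an improved Moser--Trudinger inequality on $H^1_G$.} I would first prove that for every $\varepsilon>0$ there exists $C_\varepsilon$ with
$$\log\left(\frac{1}{4\pi}\int_{S^2}h\,e^{u-\bar u}\,dv_{g_0}\right)\le\left(\frac{1}{16\pi(1+\alpha_1)}+\varepsilon\right)\int_{S^2}|\nabla u|^2\,dv_{g_0}+C_\varepsilon$$
for all $u\in H^1_G(S^2)$. The argument is by contradiction, via a concentration-compactness analysis in the spirit of Brezis--Merle and Chen--Li: if the inequality failed along a sequence $(u_n)$, then after passing to a subsequence the probability measures $\mu_n:=he^{u_n-\bar u_n}dv_{g_0}/\int he^{u_n-\bar u_n}dv_{g_0}$ would converge weakly-$*$ to a limit $\mu$ carrying an atom of mass larger than the threshold prescribed by the singular Moser--Trudinger inequality at its location. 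Each $\mu_n$ is $G$-invariant, hence so is $\mu$; therefore any atom of $\mu$ must lie on the $G$-fixed set $\{p_1,p_2\}$, since an atom off the axis would force an uncountable $S^1$-orbit of equal atoms, contradicting the finiteness of the total mass. The singular Moser--Trudinger inequality at $p_i$ yields an effective constant $\frac{1}{16\pi(1+\alpha_i)}$, and the worst case is $\alpha_i=\alpha_1=\min\{\alpha_1,\alpha_2\}$, producing the desired contradiction.

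\textbf{Coercivity, minimization and conclusion.} Writing $J_\rho^h(u)=\frac{1}{2}\int|\nabla u|^2-\rho\log\bigl(\frac{1}{4\pi}\int he^{u-\bar u}\bigr)$ and applying the improved inequality, for any $\rho<8\pi(1+\alpha_1)$ one can pick $\varepsilon$ so small that $\frac12-\frac{\rho}{16\pi(1+\alpha_1)}-\rho\varepsilon>0$, which gives coercivity of $J_\rho^h$ on $\{u\in H^1_G:\bar u=0\}$ (invariance under the addition of constants allows us to normalize $\bar u=0$). Weak lower semicontinuity of $J_\rho^h$ follows from standard Vitali-type arguments based on the Troyanov inequality \eqref{troy}, so the direct method produces a minimizer $u_\rho\in H^1_G(S^2)$. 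Since $G$ is compact and acts isometrically on $(S^2,g_0)$, Palais' principle of symmetric criticality then implies that $u_\rho$ is a critical point of $J_\rho^h$ on the full space $H^1(S^2)$, and hence an axially symmetric solution of \eqref{regular} for the prescribed value of $\rho$.

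\textbf{Main obstacle.} The crux of the argument is the improved Moser--Trudinger inequality on $H^1_G$. The delicate point is to perform the concentration-compactness analysis quantitatively enough to show that the admissible concentration points coincide with the $G$-fixed set $\{p_1,p_2\}$, and to identify the resulting constant $\frac{1}{16\pi(1+\alpha_1)}$ as the one dictated by the singular behaviour of $h$ at the worst of the two poles; everything else (coercivity, direct method, symmetric criticality) is standard once this input is in place.
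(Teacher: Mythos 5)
Your overall scheme (improved Moser--Trudinger inequality on the axially symmetric subspace, coercivity for $\rho<8\pi(1+\alpha_1)$, direct minimization, Palais' symmetric criticality) is exactly the paper's scheme, and the coercivity/minimization/Palais part is fine. The gap is in the decisive step of your improved inequality. After you localize the putative concentration at the fixed points $p_1,p_2$, you invoke ``the singular Moser--Trudinger inequality at $p_i$'' as yielding the constant $\frac{1}{16\pi(1+\alpha_i)}$. For singularities of \emph{positive} order this is not what the singular (Troyanov) inequality \eqref{troy} gives: there $\ov{\alpha}=\min\{0,\min_i\alpha_i\}=0$, so the best constant for general functions is $\frac{1}{16\pi}$, even for sequences concentrating at $p_i$ (one can center bubbles at points tending to $p_i$ slowly enough that the loss $2\alpha_i\log d(x,p_i)$ is of lower order than the Dirichlet energy; this is precisely why Theorem \ref{sing pos} exhibits blow-up at $\rho=8\pi$). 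The constant $\frac{1}{16\pi(1+\alpha_i)}$ at a pole of positive order is available only for functions that are radially symmetric about that pole, and proving this local radial inequality is the actual content of the theorem's key lemma; as written, your argument assumes the very estimate that needs proof.

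The fix is to use the axial symmetry again in the local analysis at the poles, which is what the paper does globally and explicitly in Lemma \ref{sym}: writing $v(|y|)=u(\pi^{-1}(y))$ in stereographic coordinates and substituting $v_{\alpha_1}(y)=v(|y|^{1/(1+\alpha_1)})$, one checks that the Dirichlet energy scales by the factor $(1+\alpha_1)$ (identity \eqref{grad}), that $\int \ti{h}e^{u}$ is controlled by $\int e^{u_{\alpha_1}}$ (estimate \eqref{exp}), and that the means differ by $\eps\|\nabla u\|_2^2+C_\eps$ (estimate \eqref{media}); the standard Moser--Trudinger/Onofri inequality applied to $u_{\alpha_1}$ then gives exactly the improved constant $\frac{1}{16\pi(1+\alpha_1)}$ on the symmetric subspace, with no concentration-compactness machinery needed. (Note also that the paper first replaces $h$ by $\ti h=e^{-4\pi\alpha_1(G_{p_1}+G_{p_2})}$, which dominates $h$ up to a constant, so that the same exponent $\alpha_1$ can be used at both poles.) If you prefer your contradiction argument, you must still prove this radial local inequality near each $p_i$ — e.g.\ by the same one-dimensional change of variables — since nothing in the non-symmetric singular Moser--Trudinger theory supplies the constant $\frac{1}{16\pi(1+\alpha_i)}$ when $\alpha_i>0$.
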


In the last part of the paper  we prove further general existence results using the Leray-Schauder degree theory introduced  in \cite{Li}, \cite{ChenLin1},  \cite{ChenLin2},  \cite{ChenLin3} and  \cite{ChenLin4}. Solutions of \eqref{regular} on the space 
\begin{equation}\label{spazioH0}
H_0:=\left\{u\in H^1(S^2)\;: \;\int_{S^2} u\; dv_{g_0} =0\right\}.
\end{equation}
can be obtained  as solutions of $T_\rho(u)+u=0$ where $T_\rho:H_0\ra H_0$ is defined by
\begin{equation}\label{Trho}
T_\rho(u)= \Delta_{g_0}^{-1}\left( \frac{h e^{u}}{\int_{S^2} h e^{u} dv_{g_0}}- \frac{1}{4\pi}\right).
\end{equation}
In \cite{ChenLin4}, Chen and Lin computed  the Leray-Schauder degree
\begin{equation}\label{drho}
d_\rho= deg_{LS}(Id+ T_\rho,0, B_R(0))
\end{equation}
for
\begin{equation}\label{sing set}
\rho\notin \Gamma(\alpha_1,\ldots,\alpha_n):=\left\{ 8\pi k_0 + 8\pi \sum_{i=1}^m k_i (1+\alpha_i)\;:\; k_0\in \mathbb{N}, k_i \in \{0,1\},\sum_{i=0}^m k_i>0\right\}.\end{equation}
If $m\ge 2$, one has $d_\rho\neq 0$ for any $\rho \in (0,8\pi (1+\ov{\alpha}))\bs 8\pi \N$. While Theorem \ref{sing pos} implies blow-up  as $\rho \nearrow 8\pi$, we can find solutions for $\rho=8\pi$ by taking $\rho \searrow 8\pi$, provided the  Laplacian of $K$ is not too large at the critical points of $h$.

\begin{teo}\label{teo 3}
If  $h$ satisfies \eqref{h e K} with $K\in C^\infty_+(S^2)$, $m\ge 2$,  $\alpha_1,\ldots,\alpha_m>0$ and
\begin{equation}\label{cond 1}
\Delta_{g_0} \log K (x)< \sum_{i=1}^m\alpha_i 
\end{equation}  
$\forall\; x\in \Sigma$ such that $\nabla h (x)= 0$,
then equation \eqref{regular} has a solution for $\rho = 8 \pi$. 
\end{teo}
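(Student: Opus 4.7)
The strategy is to construct solutions for $\rho$ slightly above $8\pi$ via the Leray-Schauder degree and pass to the limit $\rho\searrow 8\pi$. Since $\alpha_i>0$ for every $i$, one has $\ov{\alpha}=0$ in \eqref{troy}, so that $8\pi$ is exactly the Moser-Trudinger threshold; by Theorem \ref{sing pos}, $J_{8\pi}^h$ is not attained and any approximation from below by minimizers must blow up. Instead, using the degree formula of Chen-Lin \cite{ChenLin4}, which makes $d_\rho$ in \eqref{drho} explicit and piecewise constant on $(0,+\infty)\setminus\Gamma(\alpha_1,\ldots,\alpha_m)$, the hypotheses $m\ge 2$ and $\alpha_1,\ldots,\alpha_m>0$ yield $d_\rho\neq 0$ on a right-neighbourhood $(8\pi,8\pi+\eps_0)\setminus\Gamma$ of $8\pi$. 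Choosing $\rho_n\searrow 8\pi$ with $\rho_n\notin\Gamma$, this produces solutions $u_n\in H_0$ of $T_{\rho_n}(u_n)+u_n=0$, i.e.\ of \eqref{regular} at $\rho=\rho_n$.

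By the concentration-compactness alternative for singular Liouville equations (Brezis-Merle, Li-Shafrir and their singular versions in \cite{BarMal,ChenLin4}), $\{u_n\}$ is either relatively compact in $H^1(S^2)$---and any limit is a solution of \eqref{regular} at $\rho=8\pi$, proving the theorem---or it blows up on a finite set $\{q_1,\ldots,q_N\}\subset S^2$ with quantized local masses $m_j\in\{8\pi\}\cup\{8\pi(1+\alpha_i):q_j=p_i\}$. Since $\rho_n\to 8\pi$ and $\alpha_i>0$ forces $8\pi(1+\alpha_i)>8\pi$, the total-mass balance leaves only the possibility of a single blow-up point $q_1\notin\{p_1,\ldots,p_m\}$ with mass exactly $8\pi$.

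To rule this out, one performs the standard second-order asymptotic analysis around $q_1$, where $h$ is smooth: after suitable rescaling, $u_n$ converges in $C^2_{loc}$ to the standard entire bubble on $\R^2$, and a local Pohozaev identity yields both $\nabla h(q_1)=0$ and a sign obstruction on $\Delta_{g_0}\log h(q_1)$. Because the Robin function of $G$ on the round sphere is constant, this obstruction takes the clean form $\Delta_{g_0}\log h(q_1)\ge 0$ (see \cite{ChenLin2,ChenLin4}). On the other hand, the formula $\log h=\log K-4\pi\sum_i \alpha_i G(\cdot,p_i)$ together with the standard identity for the Laplacian of the Green's function on $(S^2,g_0)$ gives
$$
\Delta_{g_0}\log h(x)=\Delta_{g_0}\log K(x)-\sum_{i=1}^m\alpha_i
$$
at every $x\notin\{p_1,\ldots,p_m\}$, so applying \eqref{cond 1} at $q_1$ produces $\Delta_{g_0}\log h(q_1)<0$, contradicting the Pohozaev condition. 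Blow-up is therefore excluded and the theorem follows.

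The main obstacle is the refined blow-up step: one must carry out the second-order expansion of $u_n-\ov{u_n}$ around the regular blow-up point $q_1$ on $S^2$, correctly incorporating the non-local contribution of the singularities of $h$ away from $q_1$ through the Green representation, in order to derive the precise obstruction $\Delta_{g_0}\log h(q_1)\ge 0$. It is the constancy of the Robin function on the round sphere that makes this obstruction match \eqref{cond 1}, which is what turns \eqref{cond 1} into a sufficient condition for compactness. A secondary technical point is the explicit verification, from the formula of \cite{ChenLin4}, that $d_\rho$ does not vanish on the right of $8\pi$: the hypothesis $m\ge 2$ guarantees that the degree jump across $\rho=8\pi$ does not annihilate $d_\rho$ on both sides.
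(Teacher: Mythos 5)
Your proposal is correct and follows essentially the same route as the paper: solutions for $\rho_n\searrow 8\pi$ obtained from the non-vanishing of the Chen--Lin degree (the paper's Lemma \ref{BarMal}), the blow-up alternative with mass quantization forcing a single regular blow-up point of mass $8\pi$, and exclusion of blow-up via the sign of $\Delta_{g_0}\log h=\Delta_{g_0}\log K-\sum_i\alpha_i$ at that (critical) point. Your ``local Pohozaev obstruction $\Delta_{g_0}\log h(q_1)\ge 0$ for blow-up from above'' is precisely the content of the Chen--Lin refined estimate the paper quotes (Proposition \ref{Stime CL}), which gives $\rho_n-8\pi=h(q_n)^{-1}\Delta_{g_0}\log h(q_n)\,\lambda_n e^{-\lambda_n}+O(e^{-\lambda_n})$, so the two arguments coincide.
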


The same strategy can be used for $\rho= 8 k\pi $, with $k<1+\alpha_1$.

\begin{teo}\label{teo 31}
If  $h$ satisfies \eqref{h e K} with $K\in C^\infty_+(S^2)$, $m\ge 2$,  $\;0<\alpha_1\le \ldots\le\alpha_m$ and
\begin{equation}\label{cond 2} 
\Delta_{g_0} \log K (x)< \sum_{i=1}^m\alpha_i +2(1-k)
\end{equation}
$\forall\; x\in S^2$, then equation \eqref{regular} has a solution for $\rho = 8 k \pi$, $k<1+\alpha_1$. 
\end{teo}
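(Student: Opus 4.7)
The plan is to transpose the strategy of Theorem \ref{teo 3}. Since $0<k<1+\alpha_1$, the interval $(8k\pi,8\pi(1+\alpha_1))$ is non-empty; we pick a sequence $\rho_n\searrow 8k\pi$ with $\rho_n\notin\Gamma(\alpha_1,\ldots,\alpha_m)$. By the Chen-Lin degree formula \eqref{drho}, $d_{\rho_n}\neq 0$, so \eqref{regular} admits a solution $u_n$ at $\rho=\rho_n$, normalized to lie in $H_0$. To conclude it suffices to show that $\{u_n\}$ is uniformly bounded, since by elliptic regularity a subsequence will then converge to a solution of \eqref{regular} at $\rho=8k\pi$.

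Suppose instead that $u_n$ blows up. By the concentration-compactness principle of Brezis-Merle (in the singular form established by Bartolucci-Tarantello and Chen-Lin), the densities $\rho_n h e^{u_n}/\int_{S^2}h e^{u_n}\,dv_{g_0}$ concentrate at finitely many points, each carrying quantized local mass $8\pi$ (at a regular point of $h$) or $8\pi(1+\alpha_i)$ (at the singular point $p_i$). The total mass equals $\lim\rho_n=8k\pi$. Since $k<1+\alpha_i$ for every $i$, no concentration can occur at any $p_i$: a single such bubble would already exceed $8k\pi$. Hence exactly $N=k$ bubbles form at regular points $q_1,\ldots,q_k\in S^2\setminus\{p_1,\ldots,p_m\}$.

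It remains to exclude this regular $k$-point blowup, which is where \eqref{cond 2} enters. Following the refined asymptotic analysis of Chen-Lin, one writes $u_n$ as a sum of standard bubbles centered at $q_j^n\to q_j$ plus a controlled remainder, and expands $\rho_n-8k\pi$ to next order; this yields a Pohozaev-type identity which, at each $q_j$, forces $\Delta_{g_0}\log h(q_j)\ge 2(1-k)$. Using $\Delta_{g_0}\log h=\Delta_{g_0}\log K-\sum_{i=1}^m\alpha_i$ at regular points, this becomes $\Delta_{g_0}\log K(q_j)\ge \sum_{i=1}^m\alpha_i+2(1-k)$, contradicting \eqref{cond 2}. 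The main obstacle is precisely the bookkeeping in this Pohozaev-type expansion: one must keep track of the mutual Green-function interactions between bubbles and of the curvature of $S^2$, and identify the sign of $\rho_n-8k\pi$ with the right combination of the $\Delta_{g_0}\log K(q_j)$. For $k=1$ this is the analysis already carried out in the proof of Theorem \ref{teo 3}; for larger $k$ the same local computation can be performed bubble by bubble, since concentration-compactness guarantees that the $q_j^n$ remain well separated.
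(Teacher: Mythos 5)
Your argument is essentially the paper's own proof: solutions at $\rho_n\searrow 8k\pi$ obtained from the non-vanishing degree (Lemma \ref{BarMal}), blow-up necessarily occurring at exactly $k$ regular points by Propositions \ref{BarTar} and \ref{Stime CL}, and then excluded via the Chen--Lin expansion of $\rho_n-8k\pi$ together with $\Delta_{g_0}\log h=\Delta_{g_0}\log K-\sum_{i=1}^m\alpha_i$ away from the $p_i$, exactly as in the joint proof of Theorems \ref{teo 3} and \ref{teo 31} (the expansion is quoted there as a known result of Chen--Lin rather than re-derived). The only slight imprecision is that the expansion forces $\Delta_{g_0}\log h(q_j)+2(k-1)\ge 0$ at \emph{some} blow-up point (otherwise the sum would be negative), not at each $q_j$; since \eqref{cond 2} is assumed at every point of $S^2$, this does not affect the contradiction.
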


Note that Theorems \ref{teo 3} and \ref{teo 31} can be applied in the case $K\equiv 1$. If the sign condition \eqref{cond 1} is not satisfied, then it is not possible to exclude blow-up of solutions as $\rho \ra 8\pi$.  However, as it is pointed out in the introduction of \cite{ChenLin2}, under some non-degeneracy assumptions on $h$, the Leray Schauder degree $d_{8\pi}$  is well defined and can be explicitly computed by  taking into account the contributions of all the blowing-up families of solutions.  In particular one can prove that $d_{8\pi}\neq 0$ under one of the following conditions. 

\begin{teo}\label{teo 4}
Let $h$ be a Morse function on $S^2 \bs\{p_1,\ldots,p_m\}$ satisfying \eqref{h e K} with $K\in C^\infty_+(S^2)$, $m\ge 0$, $\alpha_1,\ldots,\alpha_m>0$ and assume $\Delta_{g_0} \log h \neq 0$ at all the critical points of $h$. If $h$  has $r$ local maxima and $s$ saddle points in which $\Delta_{g_0} h <0$, then equation \eqref{regular} has a solution for $\rho =8\pi$ provided $r \neq s+1$.
\end{teo}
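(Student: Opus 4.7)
The strategy is to verify that, under the Morse and non-degeneracy assumptions on $h$, the Leray--Schauder degree $d_{8\pi}$ is well defined, and to compute it explicitly using the blow-up analysis and local degree formulas developed by Chen and Lin in \cite{ChenLin2}; the conclusion follows once $d_{8\pi}$ is shown to be nonzero.

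First, I note that $\ov{\alpha}=0$ because $\alpha_i\ge 0$, so \eqref{troy} gives coercivity of $J_\rho^h$ on $H^1(S^2)$ for every $\rho\in(0,8\pi)$; moreover, since $\alpha_i>0$, the smallest element of $\Gamma(\alpha_1,\ldots,\alpha_m)$ is $8\pi$, so the formula \eqref{drho} yields $d_\rho=1$ in this whole range by homotopy from $\rho=0$. I would then study sequences $u_n$ of solutions of \eqref{regular} with $\rho_n\to 8\pi$: by the standard concentration--compactness analysis for singular Liouville equations, either $\{u_n\}$ stays bounded in $H_0$ --- giving at once a solution of \eqref{regular} at $\rho=8\pi$ by elliptic compactness --- or $u_n$ concentrates at a single point $x_0\in S^2$ with local mass exactly $8\pi$. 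Since $\alpha_i>0$, concentration at any $p_i$ would produce mass $8\pi(1+\alpha_i)>8\pi$; hence $x_0$ must be a smooth critical point of $h$, and a Pohozaev-type identity applied to the rescaled bubble forces $\Delta_{g_0}\log h(x_0)\le 0$.

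The main step is the local degree computation of \cite{ChenLin2} at each such blow-up profile. Under the non-degeneracy assumption $\Delta_{g_0}\log h\ne 0$ at every critical point of $h$, only the critical points with $\Delta_{g_0}\log h(x_0)<0$ support a (unique, up to scaling) blow-up family, and each of them contributes a local degree $-(-1)^{\mathrm{ind}_h(x_0)}$, where $\mathrm{ind}_h(x_0)$ is the Morse index of $h$ at $x_0$. Local maxima (index $2$) thus contribute $-1$ each, and saddles with $\Delta_{g_0} h<0$ (index $1$) contribute $+1$ each; local minima are automatically excluded since $\Delta_{g_0}\log h>0$ there. Combined with $d_\rho=1$ for $\rho$ slightly below $8\pi$, summing the local contributions along the homotopy gives
$$
d_{8\pi}=1-r+s,
$$
which is nonzero precisely when $r\ne s+1$, yielding the existence result.

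The main obstacle is the rigorous justification of the local degree formula in the singular setting: one has to show that the linearised Liouville operator around each bubble at a smooth critical point $x_0\ne p_i$ has the same spectral properties and satisfies the same uniform estimates as in the regular problem treated in \cite{ChenLin2}, in spite of the presence of conical singularities. The hypothesis $\alpha_i>0$ is essential here, since it guarantees a definite gap between the bubble mass $8\pi$ at a smooth critical point and the masses $8\pi(1+\alpha_i)$ that would be produced at the $p_i$, isolating the blow-up analysis from the singular set.
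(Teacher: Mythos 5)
Your proposal is correct and follows essentially the same route as the paper: the degree equals $1$ for $\rho<8\pi$, Chen--Lin's local degree contribution $\sgn(\rho-8\pi)(-1)^{\ind_q}$ at the critical points with $\Delta_{g_0}\log h<0$ (local maxima and negative-Laplacian saddles) gives $d_{8\pi}=1-r+s$, which is nonzero exactly when $r\neq s+1$. The only point you gloss over is that the well-definedness of $d_{8\pi}$ (boundedness of the solution set at $\rho=8\pi$ itself) requires the full hypothesis $\Delta_{g_0}\log h\neq 0$ at critical points via Chen--Lin's sharp expansion of $\rho_n-8\pi$, not merely the one-sided Pohozaev-type condition $\Delta_{g_0}\log h(x_0)\le 0$ you mention --- this is precisely the content of the paper's preliminary lemma before the degree computation.
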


\begin{teo}\label{teo 5}
Let $h$ be a Morse function on $S^2 \bs\{p_1,\ldots,p_m\}$ satisfying \eqref{h e K} with $K\in C^\infty_+(S^2)$, $m\ge 0$, $\alpha_1,\ldots,\alpha_m>0$ and assume $\Delta_{g_0} \log h \neq 0$ at all the critical points of $h$. If $h$  has $r'$ local minima in $S^2\bs\{p_1,\ldots,p_m\}$ and $s'$ saddle points in which $\Delta_{g_0} h >0$, then equation \eqref{regular} has a solution for $\rho =8\pi$ provided $s'\neq r'+\ov{d}$, where 
$$
\ov{d}:=d_{8\pi +\eps}=\begin{Si}{cc}
2 & \; m\ge 2,\\
0 & \; m=1,\\
-1 & \; m=0.
\end{Si}
$$
\end{teo}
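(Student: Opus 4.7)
The plan is to show that, under the non-degeneracy hypotheses, the Leray--Schauder degree $d_{8\pi}$ of $\mathrm{Id}+T_{8\pi}$ on $H_0$ is well-defined and equals $\ov{d}+r'-s'$; the assumption $s'\neq r'+\ov{d}$ then forces $d_{8\pi}\neq 0$ and existence of a solution of \eqref{regular} at $\rho=8\pi$ follows from degree theory. The scheme parallels the degree-counting strategy developed in \cite{ChenLin2,ChenLin3,ChenLin4}.

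First I would analyze blowing-up sequences $u_n$ of \eqref{regular} as $\rho_n\to 8\pi$. A Brezis--Merle/Li--Shafrir concentration-compactness argument shows that either $u_n$ converges smoothly or it concentrates at a single point $q\in S^2$ with local mass $8\pi$. Since $\alpha_i>0$, concentration at a singular point $p_i$ would require local mass $8\pi(1+\alpha_i)>8\pi$, so necessarily $q\notin\{p_1,\ldots,p_m\}$; a Pohozaev-type identity further forces $\nabla h(q)=0$. Under the Morse and non-degeneracy hypotheses on $h$, the second-order blow-up expansion of Chen--Lin produces, at each such critical point $q$, a \emph{unique} one-parameter family of blow-up solutions whose parameter $\rho$ tends to $8\pi$ from above if $\Delta_{g_0}\log h(q)>0$ and from below if $\Delta_{g_0}\log h(q)<0$. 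In particular, the solution set at $\rho=8\pi$ is discrete, so $d_{8\pi}$ is well-defined.

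Next, the local Leray--Schauder index of each blow-up branch at its $\rho=8\pi$ endpoint, obtained from a spectral analysis of the linearization of $\mathrm{Id}+T_\rho$ at the bubble, equals $(-1)^{\ind(q)+1}$, where $\ind(q)$ is the Morse index of $h$ at $q$. Summing only over the branches born on the side $\rho>8\pi$ — i.e.\ those with $\Delta_{g_0}\log h(q)>0$ — the Chen--Lin degree-jump formula reads
$$
d_{8\pi+\eps}-d_{8\pi}=\sum_{\substack{\nabla h(q)=0\\ \Delta_{g_0}\log h(q)>0}}(-1)^{\ind(q)+1}.
$$
At a critical point $q\in S^2\setminus\{p_1,\ldots,p_m\}$ one has $\Delta_{g_0}\log h(q)=\Delta_{g_0}h(q)/h(q)$, hence $\sgn\Delta_{g_0}\log h(q)=\sgn\Delta_{g_0}h(q)$. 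The critical points with $\Delta_{g_0}h>0$ are exactly the $r'$ local minima ($\ind=0$, contributing $-1$) and the $s'$ saddles with $\Delta_{g_0}h>0$ ($\ind=1$, contributing $+1$), giving $d_{8\pi+\eps}-d_{8\pi}=s'-r'$ and therefore $d_{8\pi}=\ov{d}+r'-s'$. The hypothesis $s'\neq r'+\ov{d}$ then yields $d_{8\pi}\neq 0$, and existence follows.

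The hard part is the derivation of the sign $(-1)^{\ind(q)+1}$ in the jump formula: it relies on (a) the second-order blow-up expansion that links $\sgn(\rho-8\pi)$ to $\sgn\Delta_{g_0}\log h(q)$, and (b) a precise count of the negative eigenvalues of the linearized operator at the standard Liouville bubble on $S^2$. Both ingredients are essentially contained in \cite{ChenLin2,ChenLin3,ChenLin4}; the work in the present singular framework — where $h$ vanishes like $d(x,p_i)^{2\alpha_i}$ at the punctures — amounts to localizing the blow-up analysis away from $\{p_1,\ldots,p_m\}$ and controlling the non-local contribution of the Green-function factor in \eqref{h e K}.
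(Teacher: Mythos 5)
Your plan is correct and follows essentially the same route as the paper: well-definedness of $d_{8\pi}$ from the nondegeneracy condition excluding blow-up at $\rho=8\pi$, Chen--Lin's formula for the contribution $\pm(-1)^{\ind_q}$ of the blow-up branches at critical points with $\Delta_{g_0}h>0$, yielding $d_{8\pi}=\ov{d}+r'-s'$, hence a solution when $s'\neq r'+\ov{d}$. The only ingredient you leave implicit is the identification of the stated values of $\ov{d}=d_{8\pi+\eps}$, which the paper reads off from Chen--Lin's generating-function degree formula.
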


In the regular case $m=0$, Theorem \ref{teo 4} was first proved by Chang and Yang in \cite{CY} using a min-max scheme. A different proof was later given by Struwe \cite{StrCY} through a geometric flow approach. 

\section{Proof of the Main Results}
The proof of Theorem \ref{sing pos} is a rather simple  consequence of Theorem \ref{onofri}.

\begin{proof}[Proof of Theorem \ref{sing pos}]
Let us consider $$J^1_{8\pi}(u):=\frac{1}{2}\int_{S^2}|\nabla u|^2 dv_{g_0} + 2\int_{S^2} u \; dv_{g_0}-8\pi \log\left(\frac{1}{4\pi}\int_{S^2}  e^{u} dv_{g_0}\right).$$ By Theorem \ref{onofri} we have $J^1_{8\pi}(u)\ge0$ $\forall\; u\in H^1(S^2)$. 
The condition $\alpha_1,\ldots,\alpha_m\ge 0$ guarantees $h\in C^0(S^2)$. Thus we have
\begin{equation}\label{dis}
J_{8\pi}^h(u) \ge \frac{1}{2}\int_{S^2}|\nabla u|^2 dv_{g_0} + 2\int_{S^2} u \;dv_{g_0}-8\pi \log\left(\frac{1}{4\pi}\max_{S^2} h \int_{S^2}  e^{u} dv_{g_0}\right)=
\end{equation}
$$
= J^1_{8\pi}(u)-8\pi \log \max_{S^2} h  \ge -8\pi \log \max_{S^2} h.
$$
Since  $e^{u}>0$ on $S^2$, equality  can hold only if
$$
h \equiv \max_{S^2} h
$$
which, by \eqref{h e K}, is possible only if $\alpha_1=\ldots=\alpha_m =0$ and $K$ is constant.  
To complete the proof it is sufficient to observe that the lower bound in \eqref{dis} is sharp. Let us fix a point $p\in S^2$ such that $\dis{h(p)=\max_{S^2} h}$, and   consider the stereographic projection $\pi:S^{2}\bs\{p\}\ra \R^2$. For $t>0$ we define $u_t:=\log |\det d\ph_t|$, where $\{\ph_t\}_{t>0}$ is the family of conformal  diffeomorphisms of $S^2$ that, in the local coordinates determined by $\pi$, corresponds to  the family of dilations of $\R^2$, namely 
$$
\pi(\ph_t(\pi^{-1}(y)))= t y \qquad \forall\; y\in \R^2.
$$  
By Theorem \ref{onofri}, we have $J^1_{8\pi}(u_t)=0$ $\; \forall\; t>0$. Moreover it is straightforward to verify that 
$$
\int_{S^2} e^{u_t} dv_{g_0} = 4\pi
$$
and $e^{u_t} \rw 4\pi \delta_p$ weakly as measures on $S^2$ for $t\to \infty$. Thus, one has 
$$
J_{8\pi}^h(u_t)= -8\pi \log\left(\frac{1}{4\pi}\int_{S^2} h e^{u_t} dv_{g_0} \right) \stackrel{t\to \infty
}{\ra} -8\pi \log h(p)= -8\pi \log \max_{S^2} h.
$$
\end{proof}

Let us now focus on the case of two antipodal singular points $p_1=-p_2$.  Given any point $p\in S^2 \subset \R^3$ we consider the space 
$$
H_{rad,p} :=\left\{ u\in H^{1}(S^2)\;:\; \exists \; \ph:[-1,1]\ra \R \; \mbox{ measurable s.t. }\; u(x)= v(x\cdot p)\mbox{ for a.e. }x\in S^2\right\}.
$$

\begin{lemma}\label{sym}
Suppose $m=2$, $\min\{\alpha_1,\alpha_2\}=\alpha_1>0$ and $p_2=-p_1$. If $h$ is a positive function satisfying \eqref{h1}, then the Moser-Trudinger functional $J_{\rho}^h$ is bounded from below on $H_{rad,p_1}$ for any $\rho \in (0,8\pi (1+\alpha_1))$.
\end{lemma}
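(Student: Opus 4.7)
The plan is to argue by contradiction via concentration--compactness. Assume $J_\rho^h$ is unbounded below on $H_{rad,p_1}$ for some $\rho \in (0, 8\pi(1+\alpha_1))$, and pick $(u_n) \subset H_{rad,p_1}$ with $\bar u_n = 0$ and $J_\rho^h(u_n) \to -\infty$. Since $\alpha_1, \alpha_2 > 0$ forces $\bar\alpha = 0$, inequality \eqref{troy} yields
$$J_\rho^h(u_n) \ge \left(\tfrac{1}{2} - \tfrac{\rho}{16\pi}\right)\int_{S^2} |\nabla u_n|^2 dv_{g_0} - C\rho,$$
so one may assume $\rho > 8\pi$ and $\|\nabla u_n\|_{L^2}^2 \to \infty$; otherwise the conclusion is immediate.

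Next I would pass to the probability measures $\mu_n := \frac{he^{u_n}}{\int_{S^2} h e^{u_n} dv_{g_0}}\, dv_{g_0}$ and extract, up to a subsequence, a weak-$*$ limit $\mu$. Since each $u_n$ is axially symmetric about the axis through $p_1$ and $p_2$, each $\mu_n$, and hence $\mu$, is invariant under the corresponding $SO(2)$-action; the only fixed points of that action on $S^2$ are $p_1$ and $p_2$, so any atom of $\mu$ must lie at a singular point.

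The argument then splits into two cases according to the shape of $\mu$. If $\mu$ is not a single Dirac mass, the $SO(2)$-invariance lets one select two disjoint open sets $\Omega_1, \Omega_2 \subset S^2$, each containing at most one of $p_1, p_2$, with $\liminf_n \mu_n(\Omega_j) \ge \gamma_0 > 0$ for $j = 1, 2$. A singular version of the Chen--Li improved Moser--Trudinger inequality (see \cite{ChenLin4}) would then yield, for every $\epsilon > 0$,
$$\log\!\left(\frac{1}{4\pi}\int_{S^2} h e^{u_n} dv_{g_0}\right) \le \frac{1}{16\pi(2+\alpha_1) - \epsilon}\int_{S^2} |\nabla u_n|^2 dv_{g_0} + C_\epsilon,$$
which, since $16\pi(2+\alpha_1) > 16\pi(1+\alpha_1)$, contradicts $J_\rho^h(u_n) \to -\infty$ for $\rho < 8\pi(1+\alpha_1)$. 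If instead $\mu = \delta_{p_i}$ for some $i \in \{1, 2\}$, I would exploit that $h(x) \le C d(x, p_i)^{2\alpha_i}$ on a small geodesic ball $B_r(p_i)$, together with $\mu_n(S^2 \setminus B_r(p_i)) \to 0$, to obtain a localized singular Moser--Trudinger estimate of the form
$$\log\!\left(\frac{1}{4\pi}\int_{S^2} h e^{u_n} dv_{g_0}\right) \le \frac{1+\eta}{16\pi(1+\alpha_i)}\int_{S^2} |\nabla u_n|^2 dv_{g_0} + C(\eta, r)$$
for every $\eta > 0$ and $r$ sufficiently small. Since $\alpha_i \ge \alpha_1$ and $\rho < 8\pi(1+\alpha_1) \le 8\pi(1+\alpha_i)$, picking $\eta$ small enough again contradicts $J_\rho^h(u_n) \to -\infty$.

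The main obstacle is the second case: obtaining the sharp prefactor $1+\eta$ (rather than a fixed constant $> 1$) in the local singular Moser--Trudinger inequality at the concentration point. I would handle this through a Carleson--Chang-type capacity argument in the conformally flat chart near $p_i$, reducing matters to the sharp singular Moser--Trudinger inequality on $\R^2$ of Adimurthi--Sandeep and Troyanov, while the contribution from $S^2 \setminus B_r(p_i)$ is controlled via a cut-off and the exponential concentration of $\mu_n$ inside $B_r(p_i)$.
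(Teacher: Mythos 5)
The decisive problems are in both branches of your dichotomy, and a preliminary one besides: the lemma does not assume $h$ axially symmetric (only \eqref{h1}), so your measures $\mu_n$ need not be $SO(2)$-invariant until you first majorize $h$ by a symmetric function (the paper does this with $\tilde h = e^{-4\pi\alpha_1(G(\cdot,p_1)+G(\cdot,p_2))}$). More seriously, in Case 1 the constant $\frac{1}{16\pi(2+\alpha_1)-\epsilon}$ does not follow from having two disjoint regions of mass at least $\gamma_0$ ``each containing at most one of $p_1,p_2$'': positive orders do not improve the Moser--Trudinger constant for general functions (this is exactly why Troyanov's constant in \eqref{troy} is $\frac{1}{16\pi}$ when all $\alpha_i\ge 0$), so a Chen--Li type splitting argument yields only $\frac{1}{32\pi-\epsilon}$ unless the mass in a region is actually concentrating at the singular point it contains. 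Hence your Case 1 estimate suffices only when $\alpha_1<1$; for larger $\alpha_1$ a limit measure spread over latitude bands away from the poles, or split between a band and a pole, is not excluded by anything you prove. What really forbids mass away from the poles is the one-dimensional character of axially symmetric $H^1$ functions on bands $\{|x\cdot p_1|\le 1-\delta\}$ (an oscillation bound of the type $\sup|u-\overline{u}|\le C_\delta\|\nabla u\|_2$ there), which your argument never invokes.

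In Case 2 you have placed the entire difficulty in the localized estimate with prefactor $\frac{1+\eta}{16\pi(1+\alpha_i)}$, and the results you appeal to do not supply it: for weights $d(x,p_i)^{2\alpha_i}$ with $\alpha_i>0$ the sharp constant of Troyanov/Adimurthi--Sandeep type inequalities is not improved (those improvements concern negative orders), and the gain $16\pi\rightsquigarrow 16\pi(1+\alpha_i)$ at a positive-order singularity is true only for functions that are radial about $p_i$ --- precisely the structure you must exploit. Making that step rigorous amounts to the radial substitution $v_{\alpha_1}(s)=v(s^{1/(1+\alpha_1)})$ in the stereographic chart, which is in fact the whole of the paper's proof: after passing to $\tilde h$, this change of variables scales the Dirichlet energy by the factor $1+\alpha_1$, controls $\int_{S^2}\tilde h e^{u}\,dv_{g_0}$ by $\int_{S^2} e^{u_{\alpha_1}}\,dv_{g_0}$ and the averages up to $\epsilon\|\nabla u\|_2^2+C_\epsilon$, and reduces the claim directly to the standard inequality \eqref{troy}, with no contradiction or concentration--compactness machinery. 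So as written your scheme is incomplete exactly at its crux, and the missing ingredient in both of your cases is this symmetry-based rescaling, which by itself already proves the lemma.
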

\begin{proof}
Let us consider 
$$
\ti{h}(x):= e^{-4\pi \alpha_1(G(x,p_1)+G(x,p_2))}.
$$
Since $\dis{h= K e^{-4\pi\alpha_1 G(x, p_1)-4\pi \alpha_2 G(x,p_2)} \le \ti{h} \max_{x\in S^2} K(x) e^{4\pi (\alpha_1-\alpha_2) G(x,p_2)} }$ it is sufficient to prove that the functional 
$$
\ti{J}_\rho(u):=J_{\rho}^{\ti{h}}(u)=\frac{1}{2}\int_{S^2} |\nabla u|^2 dv_{g_0} + \frac{\rho}{4\pi}\int_{S^2} u \;dv_{g_0} -\rho \log \left( \frac{1}{4\pi} \int_{S^2} \ti{h} e^{u} dv_{g_0}\right)
$$
is bounded from below for any $\rho <8\pi (1+\alpha_1)$. 
Let us consider Euclidean coordinates $(x_1,x_2,x_3)$ on $S^2$ such that $p_1=(0,0,-1)$, $p_2=(0,0,1)$, and 
let $ \pi $ be the stereographic projection from the point $p_2$. Given a function $u\in H^1(S^2)$ we define $v(|y|):= (u (\pi^{-1}(y)))$, $v_{\alpha_1}(y):= v(|y|^{\frac{1}{{1+{\alpha_1}}}})$ and  $u_{\alpha_1}(x):= v_{\alpha_1}(|\pi(x)|)$. Then we have
\begin{equation}\label{grad}
\int_{S^2}|\nabla u|^2 dv_{g_0}= 2\pi \int_{0}^\infty t |v'(t)|^2 dt   = (1+{\alpha_1})\int_{0}^{+\infty} s |v'_{\alpha_1}(s)|^2 ds = (1+{\alpha_1})\int_{S^2}  |\nabla u_{\alpha_1}|^2 dv_{g_0},
\end{equation}
and, using that $\dis{\sup_{t>0}\frac{1+t^{2(1+{\alpha_1})}}{(1+t^2)^{1+{\alpha_1}}}}<+\infty
$,
$$
\int_{S^2} \ti{h} e^{u} {dv_{g_0}} = 8\pi \int_{0}^{+\infty} e^{2{\alpha_1}} \frac{t^{2{\alpha_1}+1} e^{v(t)}}{(1+t^2)^{2(1+{\alpha_1})}} dt  \le c_{\alpha_1}  \int_{0}^{+\infty} \frac{t^{2{\alpha_1}+1} e^{v_{\alpha_1}(t^{1+{\alpha_1}})}}{(1+t^{2(1+{\alpha_1})})^{2}} dt =$$
\begin{equation}\label{exp}
=   4 \ti{c}_{\alpha_1} \int_{0}^{+\infty} \frac{s e^{v_{\alpha_1}(s)}}{(1+s^2)^2}  =  \ti{c}_{\alpha_1} \int_{S^2} e^{v_{\alpha_1}} dv_{g_0}.
\end{equation}
 
Finally, $\forall\; \eps >0$, $t\in \R^+$
$$
|v(t)-v_{\alpha_1}(t)|\le \left| \int_{t}^{t^\frac{1}{1+{\alpha_1}}} \hspace{-0.1cm}|v'(s)|ds\right| \le \left|  \int_{t}^{t^\frac{1}{1+{\alpha_1}}} \hspace{-0.1cm} s |v'(s)|^2 ds \right|^\frac{1}{2} \left|\frac{{\alpha_1}}{1+{\alpha_1}} \log t \right|\le$$
$$
\le  \frac{\eps}{4\pi} \|\nabla u\|_2^2 + c_{\eps ,{\alpha_1}}|\log t|
$$
from which 
\begin{equation}\label{media}
\left|\int_{S^2} u \; dv_{g_0} - \int_{\Sigma} u_{\alpha_1} \;  dv_{g_0} \right| \le 8\pi \int_{0}^{+\infty} \frac{|v(t)-v_{\alpha_1}(t)|}{(1+t^2)^2} \le \eps  \|\nabla u\|_2^2 + C_{\eps ,{\alpha_1}}.
\end{equation}
\eqref{grad}, \eqref{exp}, \eqref{media} and the Moser-Trudinger inequality \eqref{troy} imply 
$$
\ti{J}_\rho(u) \ge  (1+{\alpha_1}) \left(\frac{1}{2}- \rho \;\eps \right)\int_{S^2}|\nabla u_{\alpha_1}|^2 dv_{g_0} +\rho \int_{S^2} u_{\alpha_1} dv_{g_0} - \rho\log \left(\frac{1}{4\pi}\int_{S^2} e^{u_{\alpha_1}} dv_{g_0}\right) - C_{\epsilon,{\alpha_1},\rho} =
$$
$$
=(1+{\alpha_1}) \left( \left(\frac{1}{2}- \rho \;\eps \right)\int_{S^2}|\nabla u_{\alpha_1}|^2 dv_{g_0} -\frac{\rho}{1+{\alpha_1}}\log \left(\frac{1}{4\pi}\int_{S^2}\hspace{-0.14cm} e^{u_{\alpha_1}-\ov{u}_{\alpha_1}} dv_{g_0}\right) \right) - C_{\epsilon,{\alpha_1},\rho} \ge -  \ti{C}_{\epsilon,{\alpha_1},\rho}
$$
if $\rho<8\pi(1+\alpha_1)$ and $\eps$ is sufficiently small.
\end{proof}
 
\begin{oss}
Arguing as in $\cite{mio2}$, it is possible to describe  the behavior of sequences of minimum points of $J_{\rho}^h$ in $H^{1}_{rad,p_1}(S^2)$ as $\rho \nearrow 8\pi(1+\alpha_1)$ to prove that  also $J_{8\pi (1+\alpha_1)}^h$ is bounded from below. Moreover if $K\equiv 1$ and $\alpha_1= \alpha_2 = \alpha$ then we have
$$
\log \left(\frac{1}{4\pi} \int_{S^2} h e^{u-\ov{u}}dv_{g_0}\right)\le \frac{1}{16\pi (1+\alpha)} \int_{S^2} |\nabla u|^2 dv_{g_0} +\alpha -\log(1+\alpha)\quad\forall\;u\in H_{rad,p_1}(S^2),
$$
with equality holding for 
$$
u\circ \pi^{-1}(y)= 2\log\left( \frac{(1+|y|^2)^{1+\alpha}}{1+e^\lambda|y|^{2(1+\alpha)}} \right)+c,
$$
where $\lambda, c\in \R$ and $\pi$ is the stereographic projection from $p_1$.
\end{oss}
 
\begin{proof}[Proof of Theorem \ref{teo 2}]
By Lemma \ref{sym}, $\forall\; \rho <8\pi (1+\alpha_1)$ $\exists\; \delta_\rho,C_\rho>0$ such that 
$$
J_\rho^h(u)\ge \delta\int_{S^2}|\nabla u|^2 dv_{g_0} - C_\rho 
$$
$\forall\; u\in H_{rad, p_1}$. Thus  $J_\rho^h$ is coercive on the space
$$
\left\{u\in H_{rad,p_1},\int_{\Sigma} u\; dv_{g_0} =0\right\},
$$
and by direct methods we can find a minimum point of $J_{\rho}^h$ in $H^{1}_{rad,p}$. Since $h \in H^1_{rad,p_1}$, by Palais' criticality principle (see Remark 11.4 in \cite{AmbMal}), this minimum point is a solution of \eqref{regular}.
\end{proof}
 
As a consequence of Theorems \ref{sing pos} and \ref{teo 2} we obtain a multiplicity result for equation \eqref{regular}. Indeed we can observe that if $\rho <8\pi$ is sufficiently close to $8\pi$, one has 
$$
\min_{u\in H^1(S^2)}  J_{\rho }^h< \min_{u\in H_{rad,p_1} } J_\rho^h. 
$$
 
\begin{cor}
Suppose $h$ satisfies the hypotheses of Theorem \ref{teo 2}. There exists $\eps_0>0$ such that $\forall\; \rho \in (8\pi-\eps_0, 8\pi)$, equation \eqref{regular} has at least two solutions $u$, $v$ such that $u \in H_{rad,p_1}$ and $v\in H^1(S^2)\bs H_{rad,p_1}$. 
\end{cor}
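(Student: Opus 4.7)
The plan is to find two distinct solutions by comparing the global minimum of $J_\rho^h$ on $H^1(S^2)$ with the radial minimum supplied by Theorem \ref{teo 2}, and to show that for $\rho$ close enough to $8\pi$ the strict inequality
$$\min_{H^1(S^2)}J_\rho^h\;<\;\min_{H_{rad,p_1}}J_\rho^h$$
(anticipated in the remark right above the statement) holds; this forces the global minimiser to lie outside $H_{rad,p_1}$ and so to be different from the radial one. Since $\alpha_1,\alpha_2>0$ gives $\ov{\alpha}=0$, the Moser--Trudinger inequality \eqref{troy} makes $J_\rho^h$ coercive on $H_0$ for every $\rho<8\pi$, hence direct methods produce a global minimiser $v_\rho\in H^1(S^2)$, while the proof of Theorem \ref{teo 2} yields a radial minimiser $u_\rho\in H_{rad,p_1}$; both solve \eqref{regular}.

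For the upper bound, I would note that $\rho\mapsto J_\rho^h(u_0)$ is continuous for any fixed $u_0\in H^1(S^2)$, so
$$\limsup_{\rho\to 8\pi^-}\min_{H^1(S^2)}J_\rho^h\;\le\;\inf_{u_0\in H^1(S^2)}J_{8\pi}^h(u_0)\;=\;-8\pi\log\max_{S^2}h$$
by Theorem \ref{sing pos}. For the matching lower bound I would argue by contradiction: suppose $\rho_n\to 8\pi^-$ and $u_n\in H_{rad,p_1}\cap H_0$ satisfy $J_{\rho_n}^h(u_n)\to-8\pi\log\max h$. Inspecting the proof of Lemma \ref{sym} with $\eps$ chosen so that $\tfrac{1}{2}-\rho(\eps+\tfrac{1}{16\pi(1+\alpha_1)})>0$ at $\rho=8\pi$ gives a coercivity estimate on $H_{rad,p_1}\cap H_0$ that is uniform in $\rho\le 8\pi$, so $\{u_n\}$ is bounded in $H^1(S^2)$. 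Passing to a weak limit $u_n\rightharpoonup u_\infty\in H_{rad,p_1}\cap H_0$, weak lower semicontinuity of the Dirichlet term together with strong $L^1$ convergence of $he^{u_n}$ (via Rellich plus uniform integrability deduced from \eqref{troy} at a subcritical exponent $p>1$) lets me pass to the limit and obtain $J_{8\pi}^h(u_\infty)\le -8\pi\log\max h=\inf_{H^1}J_{8\pi}^h$. Hence $u_\infty$ would attain the infimum of $J_{8\pi}^h$, which is ruled out by Theorem \ref{sing pos} since $m=2\neq 0$.

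Combining the two bounds produces $\eps_0>0$ such that the displayed strict inequality holds for every $\rho\in(8\pi-\eps_0,8\pi)$, giving $v_\rho\notin H_{rad,p_1}$ and therefore $v_\rho\neq u_\rho$. The most delicate point I expect is the lower bound: verifying that the computation in Lemma \ref{sym} indeed produces a positive coefficient on the gradient term uniformly as $\rho\nearrow 8\pi$ (this is where the assumption $\alpha_1>0$ is essential), and that the convergence $he^{u_n}\to he^{u_\infty}$ in $L^1(S^2)$ survives the conical singularities of $h$ at $p_1$ and $p_2$.
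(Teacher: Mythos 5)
Your proposal is correct and follows essentially the same route as the paper: both establish the strict inequality $\min_{H^1(S^2)}J_\rho^h<\min_{H_{rad,p_1}}J_\rho^h$ near $\rho=8\pi$ by a contradiction argument combining the uniform coercivity on $H_{rad,p_1}$ from Lemma \ref{sym} (valid up to $\rho=8\pi<8\pi(1+\alpha_1)$ since $\alpha_1>0$), weak $H^1$-compactness with passage to the limit in the exponential term, and the non-existence of minimizers of $J_{8\pi}^h$ from Theorem \ref{sing pos}. The only cosmetic difference is that the paper phrases the contradiction as ``the global minimizer lies in $H_{rad,p_1}$ along a sequence $\rho_n\nearrow 8\pi$'' rather than as equality of the two minimum values, which amounts to the same argument.
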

\begin{proof}
For any $\rho<8\pi$ let us take two functions $u_\rho\in  H^1(S^2) , v_\rho \in H_{rad,p_1}$, such that
$$
J_\rho^h(u_\rho)= \min_{H^1(S^2)} J_\rho^h, \qquad J^h_\rho(v_\rho)= \min_{H_{rad,p_1}(S^2)} J^h_\rho(u)\quad \mbox{ and } \quad \int_{\Sigma} u_\rho dv_{g_0} = \int_{\Sigma} v_\rho dv_{g_0} =0.
$$ 
We claim that, for $\eps$ sufficiently small and  $\rho \in (8\pi-\eps, 8\pi)$, $u_\rho \notin H_{rad,p_1}$ and in particular $u_\rho \neq v_\rho$. Assume by contradiction that there exists a sequence $\rho_n \nearrow 8\pi$ for which $u_{\rho_n}  \in H_{rad,p_1}$. Then, applying Lemma \ref{sym} as in the proof Theorem \ref{teo 2}, we would have 
$$
J^h_{\rho_m}(u_{\rho_m}) \ge \delta\int_{S^2} |\nabla u_{\rho_n}|^2 dv_{g_0} -C
$$
for some $\delta, C>0$. Therefore $\|\nabla u_{\rho_n}\|_{2}$ would be uniformly bounded and, up to subsequences, $u_{\rho_n}\rw u$ in $H^1(S^2)$ with $J_{8\pi}^h(u)= \inf_{H^1(S^2)} J_{8\pi}^h $. This is not possible because we know by Theorem \ref{sing pos} that $J_{8\pi}^h$ has no minimum point. 
\end{proof}

Now we will discuss some sufficient conditions for  the existence of solutions of \eqref{regular}, without symmetry assumptions on $h$. Let $H_0$, $T_\rho$, $d_\rho$ and $\Gamma(\alpha_1,\ldots,\alpha_m)$ be defined as in \eqref{spazioH0}, \eqref{Trho}, \eqref{drho} and \eqref{sing set}. First of all we recall a well known result concerning blow-up analysis for sequences of solutions.

\begin{prop}[See \cite{bar-tar}, \cite{bar-mont}]\label{BarTar}
Let $(\Sigma,g)$ be a compact Riemannian surface and let  $h$ be a function satisfying \eqref{h e K} with $K\in C^{\infty}_+(\Sigma)$. If $u_n$ is a sequence of solutions of \eqref{regular} on $\Sigma$ with $\rho= \rho_n \ra \ov{\rho}$ and $\int_{\Sigma} u_n dv_g =0$, Then, up to subsequences, one of the following holds:
\begin{itemize} 
\item[(i)] $|u_n| \le C$ with $C$ depending only on $\alpha_1,\ldots,\alpha_m$, $\max_{\Sigma} K$, $\min_{\Sigma} K$ and $\ov{\rho}$. 
\item[(ii)]\label{caso2} (blow-up). There exists a finite set $S=\{q_1,\ldots,q_k\}$  such that $u_n\ra -\infty$ uniformly on compact subsets of $\Sigma\bs S$. Moreover $\frac{h u_n}{\int_{\Sigma} h e^{u_n} dv_{g}} \rw \sum_{i=1}^k \beta_i \delta_{q_i} $ with $\beta_i = 8\pi$ if $q_i \in \Sigma\bs \{p_1,\ldots,p_m\}$ and $\beta_i = 8\pi(1+\alpha_j)$ if $q_i = p_j$ for some $1\le j\le m$.  
\end{itemize}
\end{prop}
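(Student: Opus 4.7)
The plan is to combine a Brezis-Merle type alternative with a rescaling analysis at each concentration point, distinguishing whether blow-up occurs at a regular point of $h$ or at one of the singularities $p_j$. First I rewrite \eqref{regular} as $-\Delta_g u_n = f_n$, with $f_n := \rho_n h e^{u_n}/\int_\Sigma h e^{u_n} dv_g - \rho_n/|\Sigma|$. Since $\rho_n$ stays bounded, $\|f_n\|_{L^1(\Sigma)}$ is uniformly bounded, and the normalization $\bar u_n=0$ yields uniform $W^{1,q}$-bounds on $u_n$ for every $q<2$. Up to a subsequence the probability measures $\mu_n := h e^{u_n} dv_g / \int_\Sigma h e^{u_n} dv_g$ converge weakly to a probability measure $\mu$ on $\Sigma$.

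Next I apply the Brezis-Merle lemma locally. Around any point $q$, split $u_n = u_{1,n}+u_{2,n}$ on a geodesic ball $B_r(q)$, with $u_{1,n}$ solving $-\Delta u_{1,n}=f_n$ with zero Dirichlet data and $u_{2,n}$ harmonic. The estimate $\int_{B_r(q)} e^{p|u_{1,n}|}\le C$ for $p\|f_n\|_{L^1(B_r)}<4\pi$ shows that whenever $\mu(B_r(q))<4\pi/\bar\rho$ for some small $r>0$, $e^{u_n}$ is locally in $L^s$ for some $s>1$ near $q$, and by elliptic regularity $u_n$ is locally bounded. The blow-up set $S:=\{q\in\Sigma : \mu(\{q\})\ge 4\pi/\bar\rho\}$ is therefore finite. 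If $S=\emptyset$, a Harnack-type argument combined with $\bar u_n=0$ produces a uniform $L^\infty$-bound, giving case (i); otherwise the limit measure $\mu$ is supported on $S$, and the zero-mean condition forces $u_n\to -\infty$ uniformly on compact subsets of $\Sigma\setminus S$, giving the concentration described in (ii).

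To identify the quantized masses $\beta_i := \bar\rho\,\mu(\{q_i\})$, I rescale around a sequence $x_n\to q_i$ of local maxima of $u_n$. At a regular blow-up point $q_i\notin\{p_1,\ldots,p_m\}$ I take $\lambda_n := e^{-u_n(x_n)/2}$; in normal coordinates the functions $v_n(y):= u_n(\exp_{x_n}(\lambda_n y))-u_n(x_n)$ converge to an entire solution of $-\Delta v = h(q_i) e^v$ on $\R^2$ with finite integral, and by the Chen-Li classification the total mass is exactly $8\pi$. At a singular blow-up point $q_i=p_j$, I use conformal normal coordinates in which $h(y)\sim c_j|y|^{2\alpha_j}$, and the modified rescaling $\lambda_n := e^{-u_n(x_n)/(2(1+\alpha_j))}$ produces a limit solving $-\Delta v = c_j|y|^{2\alpha_j}e^v$ on $\R^2$; the Prajapat-Tarantello classification then gives mass $8\pi(1+\alpha_j)$.

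The main obstacle is the \emph{no-residual-mass} step, i.e.\ showing that the rescaling captures the full local mass $\beta_i$ and not just the mass of one inner bubble, so that no further mass leaks into an annulus shrinking to $q_i$. This requires a Pohozaev-type identity on such annuli together with a fine ODE analysis of the candidate radial singular profiles; the hypothesis $\alpha_i>-1$ is precisely what guarantees that those residual profiles are trivial and forces the quantization $\beta_i\in\{8\pi\}\cup\{8\pi(1+\alpha_j): 1\le j\le m\}$. This is the content of the arguments in \cite{bar-tar,bar-mont}, which I would follow.
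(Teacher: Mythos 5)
The paper offers no proof of this proposition at all: it is stated as a known result imported from \cite{bar-tar} and \cite{bar-mont}, so the only ``paper approach'' is the citation itself. Your outline --- Brezis--Merle alternative with the mean-zero normalization, finiteness of the blow-up set, exclusion of the intermediate alternative so that either (i) holds or $u_n\to-\infty$ off $S$, and quantization via rescaling (Chen--Li classification at regular points, Prajapat--Tarantello at the singular points) completed by a Pohozaev/no-residual-mass argument --- is exactly the strategy of those references, and you correctly single out the no-residual-mass step as the real content; deferring that step to the cited papers is consistent with how the paper itself treats the result (note only that in the paper's setting $\alpha_i\ge 0$, so the local Brezis--Merle threshold $4\pi/\ov{\rho}$ you use is the right one, whereas for $\alpha_j<0$ it would have to be replaced by $4\pi(1+\alpha_j)/\ov{\rho}$).
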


Clearly case \emph{(ii)} is possible only if $\ov{\rho}\in \Gamma(\alpha_1,\ldots,\alpha_m)$.  As a direct consequence of Proposition \ref{BarTar} we get that, if $E$ is a compact subset of $(0,+\infty)\bs \Gamma(\alpha_1,\ldots,\alpha_n)$,  the set of all the solutions of \eqref{regular} in $H_0$  with $\rho \in E$ is a bounded subset $H_0$. This bound depends only on $E$, $\alpha_1,\ldots,\alpha_m$ and on $\max_{\Sigma} K,\min_{\Sigma} K$, thus, using the homotopy invariance of the Leray-Schauder degree, one can prove that, if $R$ is chosen sufficiently large, $d_\rho$ is well defined and does not depend on $R$ and $K$. Moreover $d_\rho$ is constant on every connected component of  $(0,+\infty)\bs \Gamma(\alpha_1,\ldots,\alpha_n)$. In \cite{ChenLin4} Chen and Lin introduced the generating function 
$$
g(x):=  (1+x+x^2+x^3\ldots)^{m-2}\prod_{i=1}^m(1-x^{1+\alpha_i}) 
$$
and observed that 
\begin{equation}\label{coef}
g(x)= 1+\sum_{j=1}^\infty b_j x^{n_j}
\end{equation}
where  $n_1<n_2<n_3<\ldots$ are such that 
$$
\Gamma(\alpha_1,\ldots,\alpha_m) =\{8\pi n_j\;:\; j \ge 1\}. 
$$

\begin{Te}[\cite{ChenLin4}]\label{grado}
Let $h$ be a function satisfying \eqref{h e K}, then for $\rho \in (8\pi n_k,8\pi n_{k+1})$ we have
$$
d_\rho = \sum_{j=0}^k b_j
$$
where $b_0=1$ and $b_j$ are the coefficients in  \eqref{coef}.
\end{Te}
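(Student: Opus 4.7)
The plan is to combine the homotopy invariance of the Leray--Schauder degree with the blow-up analysis of Proposition \ref{BarTar}, proceeding inductively across the connected components of $(0,+\infty)\bs \Gamma(\alpha_1,\ldots,\alpha_m)$. As noted in the paragraph preceding the statement, $d_\rho$ is well defined and constant on each such component. Since $\alpha_i\ge 0$, the smallest element of $\Gamma/(8\pi)$ is $n_1=1$, so the lowest interval is $(0,8\pi)$. On $(0,8\pi)$, Troyanov's inequality \eqref{troy} provides coercivity of $J_\rho^h$ on $H_0$, and a homotopy shrinking $\rho$ toward $0$ — along which all zeros of $\mathrm{Id}+T_\rho$ remain uniformly bounded in $H_0$ — deforms $\mathrm{Id}+T_\rho$ to the identity, giving $d_\rho=1=b_0$. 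It therefore suffices to verify that the jump across each critical level equals the next coefficient:
\begin{equation*}
d_{(8\pi n_j)^+}-d_{(8\pi n_j)^-}=b_j.
\end{equation*}

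To analyze the jump I would localize around the blow-up families predicted by Proposition \ref{BarTar}. Any sequence $u_n$ of solutions with $\rho_n\to 8\pi n_j$ either stays bounded (and contributes continuously to $d_\rho$) or blows up on a finite set $S\subset S^2$ with mass $8\pi$ at each regular point of $S$ and $8\pi(1+\alpha_i)$ at each $p_i\in S$. Encoding $S$ by a tuple $(k_0;k_1,\ldots,k_m)$ with $k_0\in\N$ counting regular blow-up points and $k_i\in\{0,1\}$ indicating whether $p_i\in S$, the mass constraint forces
\begin{equation*}
k_0+\sum_{i=1}^m k_i(1+\alpha_i)=n_j.
\end{equation*}
For each admissible tuple I would construct an approximate solution by superposing $k_0$ rescaled standard Liouville bubbles at movable centers in $S^2\bs\{p_1,\ldots,p_m\}$ and one singular Liouville bubble at each $p_i$ with $k_i=1$, then perform a Lyapunov--Schmidt reduction to rephrase the problem as the vanishing of a finite-dimensional vector field on the configuration space of centers and scale parameters. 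The jump across $8\pi n_j$ then equals the sum over admissible tuples of the local Brouwer degree of the reduced map.

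The combinatorial identification with $b_j$ comes from assembling the tuple contributions into the generating series $g(x)$. A singular bubble at $p_i$ is rigid (no translation freedom, only scale), and the reduced vector field near it contributes a signed monomial $-x^{1+\alpha_i}$: the weight records the mass $1+\alpha_i$, and the sign reflects the change in Morse index across the singular branch, as computed in \cite{ChenLin4}. The $k_0$ regular bubbles parameterize the symmetric configuration space of $k_0$ distinct points in $S^2\bs\{p_1,\ldots,p_m\}$, together with a scale factor at each; after collapsing the scales, the corresponding Euler--Poincaré generating series equals $(1-x)^{-(m-2)}=(1+x+x^2+\cdots)^{m-2}$, reflecting $\chi(S^2\bs\{p_1,\ldots,p_m\})=2-m$. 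Multiplying yields
\begin{equation*}
g(x)=(1+x+x^2+\cdots)^{m-2}\prod_{i=1}^m(1-x^{1+\alpha_i})=1+\sum_{j\ge 1} b_j x^{n_j},
\end{equation*}
and summing the jumps from $8\pi n_1$ up to $8\pi n_k$ gives $d_\rho=b_0+b_1+\cdots+b_k$ for $\rho\in(8\pi n_k,8\pi n_{k+1})$.

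The main obstacle is the Lyapunov--Schmidt reduction for mixed configurations of regular and singular bubbles: one must (a) carry it out uniformly in the scales, which may degenerate at different rates, (b) pin down the sign contributions from each singular bubble branch, and (c) identify the topological contribution of the regular configuration space with the series $(1-x)^{-(m-2)}$. Items (a)--(c) are precisely the technical heart of \cite{ChenLin4}; once granted, the generating-function identity together with the base case $d_\rho=1$ on $(0,8\pi)$ yields the announced formula.
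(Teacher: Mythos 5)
This statement is not proved in the paper at all: it is quoted verbatim (as Theorem~\ref{grado}, attributed to \cite{ChenLin4}) and used as a black box, so there is no internal argument to compare yours against. What can be assessed is whether your proposal would stand as a proof of the Chen--Lin degree formula, and as written it does not. The skeleton is right and does follow the known strategy: degree constancy on components of $(0,+\infty)\bs\Gamma$, the value $1$ on $(0,8\pi)$ by deforming $\rho\searrow 0$ under coercivity, and the identification of the jump at each $8\pi n_j$ with $b_j$ by summing contributions of bubbling configurations $(k_0;k_1,\ldots,k_m)$ with $k_0+\sum_i k_i(1+\alpha_i)=n_j$. But the entire content of the theorem lies in the jump computation, and there your argument is programmatic rather than demonstrative: you assert that each singular bubble contributes a factor $-x^{1+\alpha_i}$ ``as computed in \cite{ChenLin4}'' and that the $k_0$ regular bubbles contribute $(1+x+\cdots)^{m-2}$ via the Euler characteristic of the configuration space, and you then list exactly these points as items (a)--(c) to be ``granted.'' Granting them is circular, since they are the theorem.

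Concretely, what is missing is not only the Lyapunov--Schmidt construction you sketch (which would show existence of bubbling branches), but the much harder converse ingredients that a degree count requires: uniform a priori estimates and an exact description of \emph{all} solutions near the critical level (so that no unaccounted solutions contribute), the sharp pointwise expansions of blowing-up solutions of the type recalled in Proposition~\ref{Stime CL} (from \cite{ChenLin1}, \cite{ChenLin3}) which control the reduced map uniformly as the scales degenerate, the treatment of configurations in which regular blow-up points collide with each other or approach the singular points $p_i$ (the configuration space is noncompact, and its ends must be shown not to contribute), and the actual computation of the local degree of the reduced finite-dimensional map, whose sign structure is what produces the factors $(1-x^{1+\alpha_i})$ and the series $(1+x+\cdots)^{m-2}$. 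The heuristic ``Euler--Poincar\'e'' identification of the regular part is plausible but is precisely the delicate topological-degree count on configuration spaces carried out in \cite{ChenLin4}; without it your generating-function bookkeeping is a restatement of the answer, not a derivation. So the proposal is an accurate road map of the cited proof, but it contains a genuine gap at its central step and cannot replace the reference.
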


As a consequence of this formula, \eqref{regular} has a solution for any $\rho \in (0,8\pi(1+\alpha_1))\bs 8\pi \N$.

\begin{lemma}\label{BarMal}
Suppose that $h$ satisfies \eqref{h e K} with $K\in C^\infty_+(S^2)$, $m\ge 2$ and $0<\alpha_1\le \ldots\le \alpha_m$. Then equation \eqref{regular} has a solution $\forall\; \rho \in (0,8\pi(1+\alpha_1))\bs 8\pi \N$.
\end{lemma}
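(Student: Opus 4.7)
The plan is to apply Theorem \ref{grado} directly and show that the Leray--Schauder degree $d_\rho$ is nonzero for every $\rho$ in the specified range; existence of a solution of \eqref{regular} then follows immediately from the homotopy invariance of $\deg_{LS}$.

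First I would verify that $d_\rho$ is well-defined on $(0,8\pi(1+\alpha_1))\setminus 8\pi\N$, i.e.\ that this set avoids $\Gamma(\alpha_1,\ldots,\alpha_m)$. Any element of $\Gamma$ with $k_i=1$ for some $i\ge 1$ contributes at least $8\pi(1+\alpha_i)\ge 8\pi(1+\alpha_1)$, because of the assumption $0<\alpha_1\le\ldots\le\alpha_m$. Therefore the elements of $\Gamma$ lying in $(0,8\pi(1+\alpha_1))$ come only from the pure integer contributions $8\pi k_0$ with $k_0\ge 1$, i.e.\ they are precisely the positive integer multiples of $8\pi$ in the interval. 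Removing $8\pi\N$ thus guarantees $\rho\notin\Gamma$.

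Next I would compute $d_\rho$ via the generating function $g(x)=(1+x+x^2+\ldots)^{m-2}\prod_{i=1}^m(1-x^{1+\alpha_i})$. Since every factor $(1-x^{1+\alpha_i})$ reduces to $1$ modulo $x^{1+\alpha_1}$, the truncation of $g$ in degrees $<1+\alpha_1$ coincides with that of $(1+x+x^2+\ldots)^{m-2}$. When $m=2$ this series is the constant $1$, so the first nonzero exponent of $g$ above $0$ is $\ge 1+\alpha_1$ and Theorem \ref{grado} gives $d_\rho=b_0=1$ on the entire interval. When $m\ge 3$ one has $(1+x+x^2+\ldots)^{m-2}=\sum_{\ell\ge 0}\binom{\ell+m-3}{m-3}x^\ell$, whose coefficients are strictly positive, so the partial sum defining $d_\rho$ through Theorem \ref{grado} is a sum of strictly positive integers. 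In both cases $d_\rho>0$, which forces \eqref{regular} to possess a solution.

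The only slightly delicate point is the bookkeeping for $m=2$: the positive integer multiples of $8\pi$ do belong to $\Gamma$, yet they do not appear among the exponents $n_j$ of $g$ and do not produce jumps in the degree, consistently with the fact that the degree is constant on each connected component of $(0,+\infty)\setminus\Gamma$ and, for $m=2$, takes the same value across consecutive components up to $8\pi(1+\alpha_1)$. This is the main thing to make explicit in the write-up; beyond it, the proof is a direct substitution into Theorem \ref{grado}.
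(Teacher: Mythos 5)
Your proposal is correct and follows essentially the same route as the paper: both verify that $(0,8\pi(1+\alpha_1))\setminus 8\pi\N$ avoids $\Gamma(\alpha_1,\ldots,\alpha_m)$ and then apply Theorem \ref{grado}, using that all coefficients $b_j$ of $g$ with exponent $n_j<1+\alpha_1$ are nonnegative, so that $d_\rho\ge 1$. Your explicit truncation of $g$ by $(1+x+x^2+\ldots)^{m-2}$ and the $m=2$ versus $m\ge 3$ bookkeeping is just a more detailed version of the paper's observation that the first negative coefficient occurs at $x^{1+\alpha_1}$.
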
 
\begin{proof}
Indeed the first negative coefficient appearing in the expansion 
$$g(x)= (1+x+x^2+x^3\ldots)^{m-2}\prod_{i=1}^m(1-x^{1+\alpha_i}) = 1 + \sum_{j=1}^\infty b_j x^{n_j}$$
is the coefficient of $x^{1+\alpha_1}$, i.e.
$$
g(x)= \sum_{j=0}^\infty b_j x^{n_j}
$$
with $b_0=1$ and $b_j\ge 0$ for any $j\ge 1$ such that $n_j<1+\alpha_1$. 
 From Theorem \ref{grado} it follows that  $d_\rho\ge 1 $ for $\rho \in (0,8\pi(1+\alpha_1))\bs 8\pi \N$.
\end{proof}

\begin{oss}
Lemma \ref{BarMal} only holds for $m\ge 2$. Indeed for $m=1$ and $K\equiv 1$ one can use a Pohozaev-type identity (see \cite{mio2}) to prove that \eqref{regular} has no solutions for $\rho \in [8\pi,8\pi(1+\alpha_1)]$.
\end{oss}

\begin{oss}
A different proof of Lemma \ref{BarMal} was given in \cite{BarMal} by Bartolucci and Malchiodi using topological methods. 
\end{oss}

By Proposition \ref{BarTar}, if $\rho_n\ra 8k \pi $  with $k<1+\alpha_1$, then any blowing-up sequence of solutions of \eqref{regular} must concentrate around exactly $k$ points $q_1,\ldots,q_k\in \Sigma \bs\{p_1,\ldots,p_m\}$. A more precise description of the blow-up set is given in \cite{ChenLin1} (see also \cite{ChenLin3}, \cite{ChenLin4}):

\begin{prop}[\cite{ChenLin1}, \cite{ChenLin3}]\label{Stime CL}
Let $u_n$ be a sequence of solutions of \eqref{regular} with  $\rho=\rho_n \ra 8\pi k$ and $k<1+\alpha_1$. If alternative \emph{(ii)} of Proposition \ref{BarTar} holds, then $u_n$ has exactly $k$ blow-up points $q_1,\ldots,q_k\in \Sigma\bs \{p_1,\ldots,p_m\}$ and $(q_1,\ldots,q_k)$ is a critical point of the function 
$$
f_h(x_1,\ldots,x_k) := \sum_{j=1}^k \left( \log h(x_j) +\sum_{l\neq j} G(x_l,x_j) \right)
$$
in $$
\{(x_1,\ldots,x_k)\in (S^2)^k\;:\; x_i \neq x_j \; \mbox{ for } i\neq j\}.
$$
Moreover we have
$$
\rho_n -8k \pi =  \sum_{j=1}^k h(q_{j,n})^{-1} \left( \Delta_{g_0} \log h (q_{j,n}) +2 (k-1)\right) \frac{ \lambda_{j,n}}{e^{\lambda_j,n}} +O(e^{-\lambda_{j,n}})
$$
where  $q_{j,n}$  are the local maxima of $u_n$ near $q_j$ and $\lambda_{j,n}=u_n(q_{j,n})$. 
\end{prop}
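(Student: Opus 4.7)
I follow the Chen--Lin blow-up analysis of \cite{ChenLin1,ChenLin3}. First, the mass assignments in alternative \emph{(ii)} of Proposition \ref{BarTar} rule out blow-up at the singular points: if some $q_i=p_j$ then $q_i$ would carry mass $8\pi(1+\alpha_j)\ge 8\pi(1+\alpha_1)>8\pi k$, exceeding the total limiting mass $8\pi k$ forced by $\rho_n\to 8\pi k$. Hence every $q_i\in S^2\bs\{p_1,\ldots,p_m\}$ carries mass exactly $8\pi$ and there are precisely $k$ of them. Around each local maximum $q_{j,n}$ with $\lambda_{j,n}\to +\infty$, a Brezis--Merle/Y.Y.\ Li-type analysis shows that, in normal coordinates centered at $q_{j,n}$, the rescaled sequence converges to the standard entire Liouville profile on $\R^2$; locally
$$
u_n(x) = \lambda_{j,n} - 2\log\!\Big(1 + \tfrac{h(q_{j,n})}{8}e^{\lambda_{j,n}}|x-q_{j,n}|^2\Big) + w_{j,n}(x),
$$
with $w_{j,n}$ uniformly small in $C^1_{\mathrm{loc}}$, while outside the blow-up set $u_n-\log\!\int_{S^2} h e^{u_n}dv_{g_0}$ converges in $C^2_{\mathrm{loc}}$ to $8\pi\sum_{l=1}^{k}G(\cdot,q_l)+\text{const}$.

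\textbf{Location of the blow-up set.} To obtain the critical point condition for $f_h$, I apply Pohozaev's identity in local conformal coordinates with the test vector fields $\partial_{x_l}$, $l=1,2$, on geodesic disks $B_r(q_j)$, letting $n\to\infty$ first and then $r\to 0$. The interior integral $\int_{B_r}\partial_{x_l}h\,e^{u_n}dv_{g_0}$ tends to $8\pi\,\partial_{x_l}\log h(q_j)$. On $\partial B_r$ one substitutes the asymptotic decomposition $u_n\simeq 8\pi G(\cdot,q_j)+8\pi\sum_{l\neq j}G(\cdot,q_l)+\text{const}$ and uses the singular structure $G(x,q_j)=-\tfrac{1}{2\pi}\log d(x,q_j)+R(x,q_j)$: the purely logarithmic singularity produces a universal quantity which is killed when $r\to 0$, whereas the smooth part generates gradients of the Robin function $R(q_j,q_j)$ and of the cross-terms $G(q_j,q_l)$. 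Assembling these contributions and exploiting the symmetry $G(x,y)=G(y,x)$ one recovers $\nabla_{x_j}f_h(q_1,\ldots,q_k)=0$ for every $j=1,\ldots,k$.

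\textbf{Quantitative identity.} For the refined expansion of $\rho_n-8k\pi$, I push the asymptotics one order further around each $q_{j,n}$. Since $(q_1,\ldots,q_k)$ is critical for $f_h$, the linear Taylor terms of $\log h$ and of the smooth part $R(\cdot,q_j)+\sum_{l\neq j}G(\cdot,q_l)$ at $q_{j,n}$ integrate to zero against the radially symmetric bubble, so only the quadratic Taylor terms survive. Their combined Laplacian is $\Delta_{g_0}\log h(q_{j,n})$ plus the Laplacian of the Green-function regular part at $q_{j,n}$, which by $-\Delta G(\cdot,q_l)=\delta_{q_l}-1/(4\pi)$ and summation over the $k-1$ other points accounts for the $2(k-1)$ term. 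Changing variables $y=e^{\lambda_{j,n}/2}(x-q_{j,n})$ in $\int h e^{u_n}dv_{g_0}$ extracts from $\int|y|^2(1+|y|^2)^{-2}dy$, truncated at scale $e^{\lambda_{j,n}/2}$, a logarithmic factor of order $\lambda_{j,n}$; the overall correction then has size $\lambda_{j,n}e^{-\lambda_{j,n}}$ with coefficient $h(q_{j,n})^{-1}(\Delta_{g_0}\log h(q_{j,n})+2(k-1))$. Summing over $j$ and relating $\int h e^{u_n}dv_{g_0}$ to $\rho_n$ through the equation \eqref{regular} gives the stated identity modulo $O(e^{-\lambda_{j,n}})$.

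\textbf{Main obstacle.} The delicate step is the quantitative expansion: one needs uniform $C^1$ control on the remainder $w_{j,n}$ at order $e^{-\lambda_{j,n}}$, so that the $\lambda_{j,n}e^{-\lambda_{j,n}}$ correction is genuinely captured by the quadratic Taylor terms and is not overwhelmed by the error. This rests on the non-degeneracy of the linearized Liouville operator on $\R^2$ (whose kernel is spanned by the two translations and the dilation), on Pohozaev balances over annuli interpolating between the inner bubble scale $e^{-\lambda_{j,n}/2}$ and a fixed neighborhood of $q_j$, and on a mode-by-mode orthogonal decomposition of $w_{j,n}$ against the kernel in order to avoid logarithmic losses. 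These technical estimates constitute the core of \cite{ChenLin1}.
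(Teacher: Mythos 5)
The paper does not actually prove this proposition: it is quoted from Chen--Lin (\cite{ChenLin1}, \cite{ChenLin3}), and the only piece argued in the paper itself is the counting remark stated just before it, namely that for $\rho_n\to 8\pi k$ with $k<1+\alpha_1$ alternative \emph{(ii)} of Proposition \ref{BarTar} forces exactly $k$ blow-up points, all in $S^2\bs\{p_1,\ldots,p_m\}$ (a concentration point at $p_j$ would carry mass $8\pi(1+\alpha_j)\ge 8\pi(1+\alpha_1)>8\pi k$, exceeding the total mass). Your first paragraph reproduces that argument correctly and completely. Beyond this there is no internal proof to compare against, and what you have written is a roadmap of the Chen--Lin analysis rather than a proof: both the critical-point condition for $f_h$ and, above all, the sharp $\lambda_{j,n}e^{-\lambda_{j,n}}$ expansion of $\rho_n-8\pi k$ rest on the refined pointwise estimates (bubble approximation up to errors of order $e^{-\lambda_{j,n}}$, uniformly from the bubble scale up to a fixed neighborhood of each $q_j$), which your ``main obstacle'' paragraph explicitly defers to \cite{ChenLin1}. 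So the proposal is an accurate summary of the cited proof, consistent with how the paper uses the result, but it does not constitute an independent derivation of the quantitative identity, which is precisely the hard content of the reference.

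Two bookkeeping points should be corrected if you keep the sketch. First, your Pohozaev computation of the location condition produces the gradient of the Robin function $R(q_j,q_j)$, which does not appear in $f_h$; it is absent only because on the round sphere $R(x,x)$ is constant, and this should be said explicitly (the proposition, as applied in the paper, is on $(S^2,g_0)$; on a general surface the Robin term must be kept). Second, the constant $2(k-1)$ is not generated by the $k-1$ cross Green functions alone: in Chen--Lin's estimate the relevant coefficient is, on a general surface, $\Delta_g\log h(q_j)+\rho^*/|\Sigma|-2K_g(q_j)$, so the background term $-\rho_n/|\Sigma|$ of equation \eqref{regular}, the Gaussian curvature entering through the conformal factor in isothermal coordinates, and the regular part of the self-interaction all contribute. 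On the unit sphere one has $\frac{8\pi k}{4\pi}-2=2(k-1)$, so your number is correct, but the attribution as written is an $S^2$-specific coincidence and would not survive on a general $(\Sigma,g)$.
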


\begin{proof}[Proof of Theorems \ref{teo 3} and \ref{teo 31}]
Take a sequence $\rho_n \searrow 8k\pi$ and a solution $u_n\in H_0$ of \eqref{regular} for $\rho= \rho_n$. By   Propositions \ref{BarTar}, \ref{Stime CL} and standard elliptic estimates, either $u_n$ is uniformly bounded in $W^{2,q}(S^2)$ for any $q\ge 1$ or  $u_n$ blows-up at $(q_1,\ldots,q_k)\in \Sigma \bs\{p_1,\ldots,p_m\}$.   In the former case we have $u_n\ra u$ in $H^1(S^2)$ and $u$ satisfies \eqref{regular} with $\rho=8\pi k$. The latter case can be excluded using \eqref{cond 1}, \eqref{cond 2}. Indeed we have
$$
\Delta_{g_0}\log  h(q_j)+2(k-1) = \Delta_{g_0} \log K -\sum_{i=1}^m\alpha_i+2(k-1)<0
$$
for any $j$. Denoting $q_{j,n}$ the maximum point of $u_n$ near $q_j$ and $\lambda_{j,n}=u_n(q_{j,n})$, by Proposition \ref{Stime CL} we get
$$
\rho_n -8\pi k =  \sum_{j=1}^k h(q_{j,n})^{-1} \left( \Delta_{g_0} \log h (q_{j,n}) +2 (k-1)\right) \frac{ \lambda_{j,n}}{e^{\lambda_j,n}} +O(e^{-\lambda_{j,n}})=$$
$$
=\sum_{j=1}^k h(q_{j})^{-1} \left( \Delta_{g_0} \log h (q_{j}) +2 (k-1)\right) \lambda_{j,n} e^{-\lambda_j,n} +o(\lambda_{j,n} e^{-\lambda_{j,n}}) <0
$$
which contradicts $\rho_n \searrow 8k \pi $.
\end{proof}

In order to prove Theorems \ref{teo 4}, \ref{teo 5} we need to compute the Leray-Schauder degree for $\rho =8\pi$.

\begin{lemma}
Let $h$ be a function satisfying \eqref{h e K} with $K\in C^\infty_+(\Sigma)$  and $\alpha_1,\ldots,\alpha_m>0$.  If $\Delta_{g_0} h(q)\neq 0$ for any $q\in \Sigma\bs\{p_1,\ldots,p_m\}$ critical point of $h$, then $d_{8\pi}$ is well defined.
\end{lemma}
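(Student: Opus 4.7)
The plan is to show that the set of solutions of \eqref{regular} at $\rho=8\pi$ is a priori bounded in $H_0$. Once this is established, for $R$ sufficiently large the map $Id+T_{8\pi}$ has no zero on $\partial B_R(0)$, so $d_{8\pi}=\deg_{LS}(Id+T_{8\pi},0,B_R(0))$ is well defined and, by homotopy invariance, independent of $R$.

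I would argue by contradiction: suppose there exists a sequence $u_n\in H_0$ of solutions of \eqref{regular} at $\rho_n=8\pi$ with $\|u_n\|_{H^1}\to\infty$. Applying Proposition \ref{BarTar} with $\ov{\rho}=8\pi$, alternative \emph{(i)} is ruled out, so the blow-up alternative \emph{(ii)} holds: $\frac{h e^{u_n}}{\int_{S^2} h e^{u_n} dv_{g_0}}\rightharpoonup \sum_{i=1}^k \beta_i\delta_{q_i}$ with $\sum_i\beta_i=8\pi$, where each $\beta_i$ is either $8\pi$ or $8\pi(1+\alpha_j)$ for some $j$. Since $\alpha_j>0$ for every $j$, each $8\pi(1+\alpha_j)>8\pi$, so the only way to achieve total mass $8\pi$ is to have exactly one blow-up point $q\in S^2\setminus\{p_1,\ldots,p_m\}$ with weight $8\pi$.

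Next I would invoke Proposition \ref{Stime CL} with $k=1$: the blow-up point $q$ is a critical point of $f_h(x)=\log h(x)$ on $S^2\setminus\{p_1,\ldots,p_m\}$, hence (since $h>0$ there) a critical point of $h$. The asymptotic expansion specializes to
$$
0 \;=\; \rho_n - 8\pi \;=\; h(q_n)^{-1}\,\Delta_{g_0}\log h(q_n)\,\lambda_n e^{-\lambda_n} + O(e^{-\lambda_n}),
$$
where $q_n\to q$ are the local maxima of $u_n$ and $\lambda_n=u_n(q_n)\to+\infty$. Dividing through by $\lambda_n e^{-\lambda_n}$ and letting $n\to\infty$ yields $\Delta_{g_0}\log h(q)=0$. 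But at the critical point $q$ one has $\nabla h(q)=0$, so $\Delta_{g_0}\log h(q)=\Delta_{g_0}h(q)/h(q)$, giving $\Delta_{g_0}h(q)=0$. This contradicts the nondegeneracy hypothesis, so the sequence $u_n$ cannot exist and the solution set is bounded.

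The main obstacle is the blow-up bookkeeping in the second paragraph: it is crucial that \emph{every} $\alpha_j$ is strictly positive, since this is what prevents mass from concentrating at a singular point $p_j$ (which would correspond to $\alpha_j=0$) and forces a unique regular bubble with weight exactly $8\pi$. This is precisely the configuration in which Proposition \ref{Stime CL} with $k=1$ delivers a single-term asymptotic that can be tested against the nondegeneracy condition $\Delta_{g_0} h(q)\neq 0$; the contradiction then costs only one scalar identity.
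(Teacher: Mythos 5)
Your proposal is correct and follows essentially the same route as the paper: reduce well-definedness of $d_{8\pi}$ to an a priori bound on the solution set in $H_0$, argue by contradiction via Proposition \ref{BarTar} (mass quantization forcing a single regular bubble since all $\alpha_j>0$), and then use the asymptotic identity of Proposition \ref{Stime CL} with $k=1$ at the critical point $q$ of $h$ to contradict $\Delta_{g_0}h(q)\neq 0$. The only cosmetic difference is that you spell out the bubble bookkeeping and the passage from $\Delta_{g_0}\log h(q)=0$ to $\Delta_{g_0}h(q)=0$, which the paper leaves implicit.
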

\begin{proof}
It is sufficient to prove that the set of solutions of \eqref{regular} in $H_0$ with $\rho =8\pi$ is a bounded subset of $H_0$. Assume by contradiction that there exists $u_n \in H_0$ solution of \eqref{regular} for $\rho= 8\pi$ such that $\|u_n\|_{H_0}\ra +\infty$. By Propositions \ref{BarTar} and \ref{Stime CL}, there exists $q\in \Sigma \bs \{p_1,\ldots,p_m\}$ such that $u_n \rw 8\pi \delta_q$, $\nabla h (q)=0$ and 
$$
0= h(q_n)^{-1}\Delta_{g_0} \log h(q_n) \lambda_n e^{-\lambda_n}  +O(e^{-\lambda_n}) = h(q)^{-2} \Delta_{g_0} h(q) \lambda_n e^{-\lambda_n} +o(\lambda_n e^{-\lambda_n})
$$
where $\lambda_n:= \max_{\Sigma}u_n$ and $u_n(q_n)=\lambda_n$. Since $\Delta_{g_0} h(q)\neq 0$ this is not possible. 
\end{proof}

Under nondegeneracy assumptions, Chen and Lin proved that for any critical $q$ point of $h$ there exists a  blowing-up sequence of solutions which concentrates at $q$. Moreover they were able to compute the total contribution to the Leray-Schauder degree of all the solutions concentrating at $q$.

\begin{prop}[see \cite{ChenLin2}, \cite{ChenLin4}]\label{rozza}
Assume that $h$ is a Morse function on $\Sigma\bs\{p_1,\ldots,p_m\}$. Given a critical point $q\in \Sigma \bs\{p_1,\ldots,p_m\}$ of $h$, the total contribution to $d_{8\pi}$ of all the solutions of \eqref{regular} concentrating at $q$ is equal to $\sgn(\rho-8\pi)(-1)^{\ind_p}$, where $\ind_{p}$ is the Morse index of $p$ as critical point of $h$.  
\end{prop}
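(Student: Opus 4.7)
The plan is to isolate the blow-up solutions concentrating at $q$ by a Lyapunov-Schmidt reduction, and then apply excision of the Leray-Schauder degree to identify the contribution with the local Brouwer degree of a finite-dimensional reduced map. First, I would build a family of approximate bubbles $U_{x,\lambda}$ centered at points $x$ close to $q$, with concentration scale $\lambda\gg 1$; in isothermal coordinates near $q$ these look like the standard solutions $U_{x,\lambda}(y)\approx \log\frac{8\lambda^2}{(1+\lambda^2 d(x,y)^2)^2}$ of the Liouville equation on $\R^2$, corrected so that they are globally well defined on $S^2$ and have zero average. The kernel of the linearized equation around each bubble is three-dimensional, generated by the $\lambda$-dilation and the two translations.

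Next, for each parameter $(x,\lambda)$ in a neighborhood of $\{q\}\times[\Lambda_0,\infty)$, the Lyapunov-Schmidt procedure produces a unique small correction $w_{x,\lambda}$ of the bubble so that $U_{x,\lambda}+w_{x,\lambda}$ solves \eqref{regular} modulo the three kernel directions. The remaining finite-dimensional system consists of a scaling equation and two location equations. A careful expansion, using that $\alpha_i>0$ ensures $h\in C^0(S^2)$ and smooth at $q$, shows that the scaling equation is, to leading order, exactly the identity of Proposition \ref{Stime CL} with $k=1$:
$$
\rho-8\pi = h(q)^{-1}\,\Delta_{g_0}\log h(x)\,\lambda e^{-\lambda}+o(\lambda e^{-\lambda}),
$$
so it can be inverted to obtain a unique $\lambda=\lambda(x,\rho)\to\infty$ provided $\sgn(\rho-8\pi)=\sgn(\Delta_{g_0}\log h(q))$; on the other side, no solution concentrating at $q$ exists. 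The location equations, after eliminating $\lambda$, reduce to $\nabla h(x)+o(1)=0$ as $\lambda\to\infty$.

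Since $h$ is Morse and $q$ is a non-degenerate critical point, the reduced map is $C^1$-close to $\nabla h$ near $q$, hence the local Brouwer degree of the reduced location map at $q$ equals $\sgn\det(\nabla^2 h(q))=(-1)^{\ind_q}$. Combining this with excision of the Leray-Schauder degree around the isolated set of bubbling solutions clustered at $q$ (and using the standard formula expressing the LS-degree of $\mathrm{Id}+T_\rho$ at such a cluster in terms of the finite-dimensional Brouwer degree of the reduced problem, see Li-Chen-Lin), we obtain that the total contribution of all such solutions is $\sgn(\rho-8\pi)(-1)^{\ind_q}$, where the sign prefactor accounts for the side of $8\pi$ on which these solutions live.

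The main obstacle is the Lyapunov-Schmidt expansion itself: one has to control the interaction between the bubble and the potential $h$ (which is only $C^0$ at the singular points $p_i$, although smooth at $q$) to sufficient order so that both the coefficient $h(q)^{-1}\Delta_{g_0}\log h(q)$ of $\lambda e^{-\lambda}$ in the scaling equation and the Hessian of $h$ governing the location equation emerge as the genuine leading terms, not swamped by the remainders. Once these sharp error estimates are established, the degree computation follows from standard finite-dimensional arguments.
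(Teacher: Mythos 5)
First, a point of reference: the paper does not prove Proposition \ref{rozza} at all --- it is quoted from \cite{ChenLin2}, \cite{ChenLin4}, where the argument rests on sharp pointwise a priori estimates showing that \emph{every} solution concentrating at $q$ coincides, to high precision, with a standard bubble whose parameters obey the relation in Proposition \ref{Stime CL}, followed by a computation of the resulting jump of the degree across $\rho=8\pi$. Your Lyapunov--Schmidt scheme is a plausible reconstruction, and your reduced system (scaling equation $\rho-8\pi\approx h(q)^{-1}\Delta_{g_0}\log h\,\lambda e^{-\lambda}$, location equation $\nabla h(x)+o(1)=0$) is the right one. But the step that carries the real weight is the one you set aside as ``the main obstacle'': excision computes the \emph{total} contribution only if you know that all solutions concentrating at $q$ lie on your reduced family, i.e.\ that any concentrating solution can be written as $U_{x,\lambda}+w$ with $w$ small in a suitable norm and $(x,\lambda)$ solving the reduced equations. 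That exhaustiveness/uniqueness statement is precisely the content of Chen--Lin's refined estimates (far finer than Proposition \ref{BarTar}), and without it the degree of the reduced map only counts the solutions you yourself constructed, not the cluster.

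The second genuine gap is the sign. You obtain $(-1)^{\ind_q}$ from the location map and then append $\sgn(\rho-8\pi)$ ``to account for the side of $8\pi$'', which is not a derivation: that factor must be extracted from the $\lambda$-component of the reduced map (monotonicity of $\lambda e^{-\lambda}$ against the sign of $\Delta_{g_0}\log h(q)$) together with the parity $(-1)^{\nu}$ of the negative spectrum of the linearization transversal to the approximate kernel in the formula relating $\deg_{LS}(\mathrm{Id}+T_\rho)$ to the Brouwer degree of the reduced map; neither is checked. Moreover, as the statement is used in the proofs of Theorems \ref{teo 4}, \ref{teo 5} (namely $d_{8\pi}=1-\sum_{q\in\Lambda_-}(-1)^{\ind_q}=\ov d+\sum_{q\in\Lambda_+}(-1)^{\ind_q}$), the quantity $\sgn(\rho-8\pi)(-1)^{\ind_q}$ is the \emph{jump} $d_{8\pi}-d_{8\pi\mp\eps}$ attributable to $q$, i.e.\ \emph{minus} the local Leray--Schauder degree of the cluster, not the cluster degree itself. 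Concrete check: at a nondegenerate global maximum $q$ of $h$ (so $\ind_q=2$, $\Delta_{g_0}h(q)<0$) the concentrating solutions exist for $\rho\nearrow 8\pi$ and contain the global minimizers of the coercive functional $J_\rho^h$ (which must blow up there by Theorem \ref{sing pos}), so the excised cluster degree is $+1$, while $\sgn(\rho-8\pi)(-1)^{\ind_q}=-1$. Hence, read literally as ``LS degree of the cluster obtained by excision'', your conclusion has the wrong sign; it agrees with the Proposition only because the ``contribution to $d_{8\pi}$'' there is the correction one adds to $d_{8\pi\mp\eps}$ when these solutions escape to infinity at $\rho=8\pi$. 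You need to make this bookkeeping, and the sign coming from the scaling direction and the transversal spectrum, explicit --- otherwise the formula comes out right only by assertion.
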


\begin{proof}[Proof of Theorems \ref{teo 4}, \ref{teo 5}]
Let us denote 
$$
\Lambda_- =\left\{q \in \Sigma\bs\{p_1,\ldots,p_m\}\;:\; \nabla h (q) =0,\;\Delta_{g_0} h (q)<0\right\},
$$
$$
\Lambda_+ =\left\{q \in \Sigma\bs\{p_1,\ldots,p_m\}\;:\; \nabla h (q) =0,\;\Delta_{g_0} h (q)>0\right\}.
$$
By Proposition \ref{rozza} we have 
$$
d_{8\pi} = 1 - \sum_{q\in \Lambda_-}(-1)^{\ind_q} = \ov{d}+ \sum_{q\in \Lambda_+}(-1)^{\ind_q},
$$
where $\ov{d}$ is the Leray-Schauder degree for $\rho \in (8\pi, 8\pi +\eps)$. Clearly $\Lambda_-$ contains only the local maxima of $h$ and the saddle points of $h$ in which $\Delta_{g_0} h <0$, thus
$$
d_{8\pi} = 1- r+s.
$$
Therefore we get existence of solutions if $r\neq s+1$. Similarly we have 
$$
d_{8\pi} = \ov{d} -s' + r'
$$
and we get solutions if $s' \neq r' +\ov{d}$. $\ov{d}$ can be computed using Theorem \ref{grado}. If $m\ge2$,
$$
g(x)=1+x+\cdot... \quad \Longrightarrow \quad \ov{d}=2.
$$
If $m=1$ we have 
$$
g(x):=1-x -x^{1+\alpha} + x^{2(1+\alpha)} \quad\Longrightarrow \quad \ov{d}=0.
$$
If $m=0$, then 
$$
g(x)= 1-2x +x^2 \quad\Longrightarrow \quad \ov{d}=-1.
$$
This concludes the proof.
\end{proof}

\bibliographystyle{plain}
\bibliography{bib}
\end{document}